\providecommand{\tabularnewline}{\\}
\providecommand{\algorithmname}{Algorithm}
\numberwithin{equation}{section}
\numberwithin{figure}{section}
\newenvironment{lyxcode}
	{\par\begin{list}{}{
		\setlength{\rightmargin}{\leftmargin}
		\setlength{\listparindent}{0pt}
		\raggedright
		\setlength{\itemsep}{0pt}
		\setlength{\parsep}{0pt}
		\normalfont\ttfamily}%
	 \item[]}
	{\end{list}}
\theoremstyle{plain}
\newtheorem{thm}{\protect\theoremname}[section]
\theoremstyle{plain}
\newtheorem{lyxalgorithm}[thm]{\protect\algorithmname}
\theoremstyle{remark}
\newtheorem{rem}[thm]{\protect\remarkname}
\theoremstyle{plain}
\newtheorem{prop}[thm]{\protect\propositionname}
\theoremstyle{plain}
\newtheorem{lem}[thm]{\protect\lemmaname}
\theoremstyle{plain}
\newtheorem{assumption}[thm]{\protect\assumptionname}
\newcommand{\Nout}{\mbox{\rm N}_{\rm \scriptsize{out}}}
\newcommand{\outdeg}{{\deg}_{\rm out}}
\newcommand{\dom}{\mbox{\rm dom}}
\providecommand{\algorithmname}{Algorithm}
\providecommand{\assumptionname}{Assumption}
\providecommand{\lemmaname}{Lemma}
\providecommand{\propositionname}{Proposition}
\providecommand{\remarkname}{Remark}
\providecommand{\theoremname}{Theorem}
\begin{document}
\begin{lyxcode}
\title[Linear convergence of dual distributed optimization]{Linear convergence of a dual optimization formulation for distributed optimization on directed graphs with unreliable communications} 
\end{lyxcode}

\subjclass[2010]{68W15, 90C25, 90C30, 65K05}
\begin{abstract}
This work builds on our recent work on a distributed optimization
algorithm for graphs with directed unreliable communications. We show
its linear convergence when we take either the proximal of each function
or an affine minorant for when the function is smooth. 
\end{abstract}

\author{C.H. Jeffrey Pang}
\thanks{C.H.J. Pang acknowledges grant R-146-000-265-114 from the Faculty
of Science, National University of Singapore. }
\curraddr{Department of Mathematics\\ 
National University of Singapore\\ 
Block S17 08-11\\ 
10 Lower Kent Ridge Road\\ 
Singapore 119076 }
\email{matpchj@nus.edu.sg}
\date{\today{}}
\keywords{Distributed optimization, directed graphs, unreliable communications,
Dykstra's algorithm}

\maketitle
\tableofcontents{}

\section{Introduction}

Let $G=(V,E)$ be a directed graph. Consider the distributed optimization
problem
\begin{equation}
\begin{array}{c}
\underset{x\in\mathbb{R}^{m}}{\min}\underset{i\in V}{\overset{\phantom{i\in V}}{\sum}}\left[f_{i}(x)+\frac{1}{2}\|x-\bar{x}_{i}\|^{2}\right].\end{array}\label{eq:dist_opt_pblm}
\end{equation}
Here, $f_{i}(\cdot)$ are closed convex functions. The challenge in
distributed optimization is that the communications in the algorithm
need to be along the directed edges in the underlying graph. Ideally,
one would like to solve the problem in \eqref{eq:dist_opt_pblm} without
the quadratic regularization term, but this regularization term shall
be useful for algorithm we describe in this paper. 

If $f_{i}(\cdot)$ are the zero functions and $m=1$, then the minimizer
of \eqref{eq:dist_opt_pblm} is exactly $\frac{1}{|V|}\sum_{i\in V}\bar{x}_{i}$,
which is precisely the averaging consensus problem. Some results on
averaged consensus include \cite{Boyd_distrib_averaging,Distrib_averaging_Dimakis_Kar_Moura_Rabbat_Scaglione}.
Recently, a distributed asynchronous algorithm for averaged consensus
on a directed graph with unreliable communications was designed in
\cite{Bof_Carli_Schenato_2017}, building on the work of \cite{Benezit_Blondel_Thiran_Tsitsilkis_Vetterli_2010_weighted_avr,Vaidya_Hadji_Domin_2011,Hadji_Vaidya_Domin_2016_running_sums}.
The averaged consensus algorithm is useful as a building block for
further distributed optimization algorithms. 

If the averaged consensus algorithm were made to be a building block
of a distributed algorithm, then one has to decide whether the averaged
consensus algorithm has run for sufficiently long. This introduces
communication problems to the algorithm, which affects its effectiveness
and applicability. If the distributed optimization algorithm were
extended from the averaged consensus algorithm instead, then we do
not have this issue. 

\subsection{Distributed dual ascent algorithms for \eqref{eq:dist_opt_pblm}}

We showed in \cite{Pang_Dist_Dyk} that a dual ascent interpretation
of \eqref{eq:dist_opt_pblm} leads to a distributed, asynchronous,
decentralized algorithm with deterministic convergence (while also
showing that the algorithm also works for time-varying graphs) on
undirected graphs. The dual ascent interpretation can be traced to
\cite{Combettes_Dung_Vu_SVAA,Combettes_Dung_Vu_JMAA,Pesquet_Abboud_gang_2017_JMIV}
and perhaps earlier, and in the case where the $f_{i}(\cdot)$ are
all indicator functions of closed convex sets, to Dykstra's algorithm
\cite{Dykstra83,BD86,Han88,Gaffke_Mathar} (see also \cite{Deustch01,BauschkeCombettes11,EsRa11}).
Our proof in \cite{Pang_Dist_Dyk} was adapted from \cite{Gaffke_Mathar},
and also makes use of ideas in \cite{Hundal-Deutsch-97} in order
to show the asynchronous nature of our algorithm. We also developed
this dual ascent interpretation more extensively for undirected graphs
in subsequent works by looking at convergence rates \cite{Pang_rate_D_Dyk}
and for the case when $f_{i}(\cdot)$ are level sets of subdifferentiable
functions \cite{Pang_level_sets_Dyk}. 

An algorithm for the averaged consensus problem on directed graphs
with unreliable communications was recently proposed in \cite{Bof_Carli_Schenato_2017}.
It is natural to ask whether the results for undirected graphs carry
over to the case of directed unreliable communications. In \cite{Pang_dir_1},
we showed that the algorithm of \cite{Bof_Carli_Schenato_2017} can
be generalized to the problem \eqref{eq:dist_opt_pblm} and has a
similar dual ascent intepretation as \cite{Pang_Dist_Dyk}. 

\subsection{Contributions of this paper}

We consider the algorithm in \cite{Pang_dir_1} for the case where
the edges are directed and unreliable. We show that we can apply the
techniques in \cite{Pang_sub_Dyk} (proved for the case of undirected
graphs) to avoid proximal operations on $f_{i}(\cdot)$ by taking
subgradient approximations and using affine minorants. The techniques
in \cite{Pang_rate_D_Dyk} are generalized to give linear convergence
of the dual objective value when the smooth functions $f_{i}(\cdot)$
may be approximated by affine minorants, which leads to the linear
convergence to the primal minimizer. 

\section{Preliminaries: Algorithm description}

In this section, we incorporate \cite{Pang_sub_Dyk,Pang_rate_D_Dyk}
(for the case when some of the functions $f_{i}(\cdot)$ in \eqref{eq:dist_opt_pblm}
are treated as subdifferentiable functions) into \cite{Pang_dir_1},
stating the dual optimization interpretation of \eqref{eq:dist_opt_pblm}
and our algorithm. 

Let $\bar{m}=\frac{1}{|V|}\sum_{i\in V}\bar{x}_{i}$. We have 
\begin{equation}
\sum_{i\in V}\frac{1}{2}\|x-\bar{x}_{i}\|^{2}=\frac{|V|}{2}\|x-\bar{m}\|^{2}+\underbrace{\sum_{i\in V}\bar{x}_{i}^{T}\bar{x}_{i}-|V|\bar{m}^{T}\bar{m}}_{C}.\label{eq:make-C}
\end{equation}
So we can assume that all $\bar{x}_{i}$ in \eqref{eq:dist_opt_pblm}
are equal to $\bar{m}$. (This assumption does not mean that a starting
primal variable needs to be $\bar{m}$.) Let $\{s_{\alpha}\}_{\alpha\in V\cup E}$
be such that 
\begin{equation}
\begin{array}{c}
\underset{\alpha\in V\cup E}{\sum}s_{\alpha}=|V|,\text{ and }s_{\alpha}\begin{cases}
>0 & \text{ for all }\alpha\in V\\
\geq0 & \text{ for all }\alpha\in E.
\end{cases}\end{array}\label{eq:s-sums-to-V}
\end{equation}
Let $\mathbf{x}\in[\mathbb{R}^{m}]^{|V\cup E|}$, and for all $i\in V$,
let $\mathbf{f}_{i}:[\mathbb{R}^{m}]^{|V\cup E|}\to\mathbb{R}\cup\{\infty\}$
be defined as $\mathbf{f}_{i}(\mathbf{x})=f_{i}([\mathbf{x}]_{i})$.
Let the set $F$ be 
\begin{equation}
F:=\big\{\{i,(i,j)\}:(i,j)\in E\big\}\cup\big\{\{i,j\}:(i,j)\in E\big\}.\label{eq:set-F-formula}
\end{equation}
and let the hyperplane $H_{\{\alpha_{1},\alpha_{2}\}}$, where $\{\alpha_{1},\alpha_{2}\}\in F$,
be defined by 
\begin{equation}
H_{\{\alpha_{1},\alpha_{2}\}}:=\{\mathbf{x}\in[\mathbb{R}^{m}]^{|V\cup E|}:[\mathbf{x}]_{\alpha_{1}}=[\mathbf{x}]_{\alpha_{2}}\}.\label{eq:set-H-formula}
\end{equation}
We assume the underlying graph is strongly connected, so the intersection
$\cap_{\beta\in F}H_{\beta}$ is the diagonal set $D$ defined by
\begin{equation}
\cap_{\beta\in F}H_{\beta}=D:=\big\{\mathbf{x}\in[\mathbb{R}^{m}]^{|V\cup E|}:[\mathbf{x}]_{\alpha_{1}}=[\mathbf{x}]_{\alpha_{2}}\text{ for all }\alpha_{1},\alpha_{2}\in V\cup E\big\}.\label{eq:diag-set}
\end{equation}
The primal problem \eqref{eq:dist_opt_pblm} can then be equivalently
written in the product space formulation as 
\begin{equation}
\begin{array}{c}
\underset{\mathbf{x}\in[\mathbb{R}^{m}]^{|V\cup E|}}{\overset{\phantom{\mathbf{x}\in[\mathbb{R}^{m}]^{|V\cup E|}}}{\min}}\underset{\alpha\in V\cup E}{\overset{\phantom{\alpha\in V\cup E}}{\sum}}\frac{s_{\alpha}}{2}\|[\mathbf{x}]_{\alpha}-\bar{m}\|^{2}+\underset{i\in V}{\overset{}{\sum}}\mathbf{f}_{i}(\mathbf{x})+\underset{\beta\in F}{\overset{}{\sum}}\delta_{H_{\beta}}(\mathbf{x})+C,\end{array}\label{eq:2nd-primal}
\end{equation}
where $C$ is as marked in \eqref{eq:make-C}. Any component of an
optimal solution to \eqref{eq:2nd-primal} is an optimal solution
to \eqref{eq:dist_opt_pblm}. The (Fenchel) dual of \eqref{eq:2nd-primal}
can be calculated to be 
\begin{equation}
\sup_{{\mathbf{z}_{\alpha}\in[\mathbb{R}^{m}]^{|V\cup E|}\atop \alpha\in V\cup F}}\frac{|V|}{2}\|\bar{m}\|^{2}-\sum_{i\in V}\mathbf{f}_{i}^{*}(\mathbf{z}_{i})-\sum_{\beta\in F}\delta_{H_{\beta}^{\perp}}(\mathbf{z}_{\beta})-\sum_{\alpha\in V\cup E}\frac{s_{\alpha}}{2}\left\Vert \bar{m}-\frac{1}{s_{\alpha}}\left[\sum_{\alpha_{2}\in V\cup F}\mathbf{z}_{\alpha_{2}}\right]_{\alpha}\right\Vert ^{2}+C.\label{eq:dual-1}
\end{equation}
The case when $s_{\alpha}=1$ for all $\alpha\in V$ and $s_{\alpha}=0$
for all $\alpha\in E$ has been discussed in detail in \cite{Pang_Dist_Dyk,Pang_sub_Dyk,Pang_rate_D_Dyk,Pang_level_sets_Dyk}.
The treatment there implies that there is strong duality between \eqref{eq:2nd-primal}
and \eqref{eq:dual-1}, even if dual optimizers may not exist. We
can define the values $\{x_{\alpha}\}_{\alpha\in V\cup E}$ by 
\begin{equation}
\begin{array}{c}
x_{\alpha}:=\bar{m}-\frac{1}{s_{\alpha}}\Big[\underset{\alpha_{2}\in V\cup F}{\sum}\mathbf{z}_{\alpha_{2}}\Big]_{\alpha},\end{array}\label{eq:x-alpha-form}
\end{equation}
which simplifies the formula in \eqref{eq:dual-1}. As explained in
\cite{Pang_dir_1}, this $x_{\alpha}$ is precisely the primal value
that is being tracked by each vertex or edge $\alpha$. To simplify
discussions, we let 
\[
\mathbf{z}=\{\mathbf{z}_{i}\}_{i\in V}\text{, }\mathbf{x}=\{x_{\alpha}\}_{\alpha\in V\cup E}\text{, and }\text{\ensuremath{\mathbf{s}=\{s_{\alpha}\}_{\alpha\in V\cup E}.}}
\]
Sometimes we may write $[\mathbf{x}]_{\alpha}$ in place of $x_{\alpha}$.
Sometimes we may have $\mathbf{z}$ to mean $\{\mathbf{z}_{\alpha}\}_{\alpha\in V\cup F}$
and not mention $\mathbf{x}$ because of the relationship \eqref{eq:x-alpha-form}.
For convenience, instead of considering \eqref{eq:dual-1}, we may
at times consider 
\begin{equation}
\inf_{{\mathbf{z}_{\alpha}\in[\mathbb{R}^{m}]^{|V\cup E|}\atop \alpha\in V\cup F}}F_{S}(\mathbf{z},\mathbf{s}):=\sum_{i\in V}\mathbf{f}_{i}^{*}(\mathbf{z}_{i})+\sum_{\beta\in F}\delta_{H_{\beta}^{\perp}}(\mathbf{z}_{\beta})+\sum_{\alpha\in V\cup E}\frac{s_{\alpha}}{2}\|x_{\alpha}\|^{2}.\label{eq:dual-2}
\end{equation}
Note that \eqref{eq:dual-1} and \eqref{eq:dual-2} are related by
a sign change and a constant. We partition the vertex set $V$ as
the disjoint union \emph{$V=V_{1}\cup V_{2}$ }so that 
\begin{itemize}
\item $f_{i}(\cdot)$ are proximable functions for all $i\in V_{1}$.
\item \emph{$f_{i}(\cdot)$ }are subdifferentiable functions (i.e., a subgradient
is easy to obtain) such that $\dom(f_{i})=\mathbb{R}^{m}$ for all
$i\in V_{2}$. 
\end{itemize}
In \cite{Pang_dir_1}, we showed that in the case when all the functions
$f_{i}(\cdot)$ are treated as proximable functions (i.e., $V_{2}=\emptyset$),
the algorithm there produces iterates $(\mathbf{z},\mathbf{x},\mathbf{s})$
such that the function values in \eqref{eq:dual-2} are nonincreasing.
For functions in $V_{2}$, the strategy in \cite{Pang_sub_Dyk,Pang_rate_D_Dyk}
is to create approximations $f_{i}^{k}(\cdot)\leq f_{i}(\cdot)$ so
that the conjugates satisfy $[f_{i}^{k}]^{*}(\cdot)\geq f_{i}^{*}(\cdot)$.
Let $\mathbf{f}_{i}^{k}(\cdot)$ be defined in a similar manner as
$\mathbf{f}_{i}(\cdot)$, and
\begin{equation}
\inf_{{\mathbf{z}_{\alpha}\in[\mathbb{R}^{m}]^{|V\cup E|}\atop \alpha\in V\cup F}}\tilde{F}_{S}^{k}(\mathbf{z},\mathbf{x},\mathbf{s}):=\sum_{i\in V_{1}}\mathbf{f}_{i}^{*}(\mathbf{z}_{i})+\sum_{i\in V_{2}}[\mathbf{f}_{i}^{k}]^{*}(\mathbf{z}_{i})+\sum_{\beta\in F}\delta_{H_{\beta}^{\perp}}(\mathbf{z}_{\beta})+\sum_{\alpha\in V\cup E}\frac{s_{\alpha}}{2}\|x_{\alpha}\|^{2}\label{eq:dual-3}
\end{equation}
would be a majorization of the function in \eqref{eq:dual-2}. We
shall prove in Section \ref{sec:Main-result} that solving subproblems
of the form \eqref{eq:dual-3} gives us linear convergence of the
minimal value of \eqref{eq:dual-2} when all the functions $f_{i}(\cdot)$
are smooth. 

Just like in \cite{Bof_Carli_Schenato_2017}, we introduce the variable
$y_{\alpha}$ so that 
\begin{equation}
y_{\alpha}=s_{\alpha}x_{\alpha}\text{ for all }\alpha\in V\cup E.\label{eq:y-equals-s-x-formula}
\end{equation}
With these preliminaries, we present Algorithm \vref{alg:dir-alg}.
Operations $A$ and $B$ in Algorithm \ref{alg:Op_ABC} are the same
as in \cite{Bof_Carli_Schenato_2017,Pang_dir_1}, but Operation $C$
is now modified from \cite{Pang_dir_1} to take into account the setup
in \cite{Pang_sub_Dyk,Pang_rate_D_Dyk}.

\begin{algorithm}[!h]
\begin{lyxalgorithm}
\label{alg:dir-alg} (Main algorithm) We have the following algorithm. 

$\quad$Start with $y_{\alpha}^{0}$ such that $\frac{1}{|V|}\sum_{i\in V}y_{i}^{0}=\bar{m}$,
and $y_{\alpha}^{0}=0$ for all $\alpha\in E$. 

$\quad$Start with $s_{\alpha}^{0}=0$ for all $\alpha\in E$.

$\quad$Start with $s_{i}^{0}=1$, $\sigma_{i,y}^{0}=0$ and $\sigma_{i,s}^{0}=0$
for all $i\in V$. 

$\quad$For all $i\in V_{2}$, let $f_{i}^{0}:\mathbb{R}^{m}\to\mathbb{R}$
and $[\mathbf{z}_{i}^{0}]_{i}$ be

$\quad$$\quad$such that $f_{i}^{0}(\cdot)$ is affine with gradient
$[\mathbf{z}_{i}^{0}]_{i}$ and $f_{i}^{0}(\cdot)\leq f_{i}(\cdot)$,

$\quad$$\quad$and let $y_{i}^{0}\leftarrow y_{i}^{0}-[\mathbf{z}_{i}^{0}]_{i}$.

$\quad$For all $i\in V_{1}$, start with $[\mathbf{z}_{i}^{0}]_{i}=0$. 

$\quad$Start with $\rho_{(i,j),y}^{0}=0$ and $\rho_{(i,j),s}^{0}=0$
for all $(i,j)\in E$. 

$\quad$For $k=1,\dots$

$\quad$$\quad$\% Carry all data from last iteration.

$\quad$$\quad$$y_{\alpha}^{k}=y_{\alpha}^{k-1}$ and $s_{\alpha}^{k}=s_{\alpha}^{k-1}$
for all $\alpha\in V\cup E$, and $f_{i}^{k}(\cdot)=f_{i}^{k-1}(\cdot)$
for all $i\in V_{2}$

$\quad$$\quad$$\sigma_{i,y}^{k}=\sigma_{i,y}^{k-1}$, $\sigma_{i,s}^{k}=\sigma_{i,s}^{k-1}$
and $[\mathbf{z}_{i}^{k}]_{i}=[\mathbf{z}_{i}^{k-1}]_{i}$ for all
$i\in V$

$\quad$$\quad$$\rho_{(i,j),y}^{k}=\rho_{(i,j),y}^{k-1}$ and $\rho_{(i,j),s}^{k}=\rho_{(i,j),s}^{k-1}$
for all $(i,j)\in E$

$\quad$$\quad$Perform operation A, B and/or C in Algorithm \ref{alg:Op_ABC}.

$\quad$end for
\begin{lyxalgorithm}
\label{alg:Op_ABC}(Operations $A$, $B$ and $C$) We describe operations
$A$, $B$ and $C$:

01$\quad$\textbf{$A$ (Node $i$ sends data to all out-neighbors) }

02$\quad$$\quad$Choose a node $i\in V$.

03$\quad$$\quad$$y_{i}^{k}=y_{i}^{k}/(\outdeg(i)+1)$; $s_{i}^{k}:=s_{i}^{k}/(\outdeg(i)+1)$

04$\quad$$\quad$$\sigma_{i,y}^{k}=\sigma_{i,y}^{k}+y_{i}^{k}$;
$\sigma_{i,s}^{k}=\sigma_{i,s}^{k}+s_{i}^{k}$.

05$\quad$\textbf{$B$ (Node $j$ receives data from $i$) }

06$\quad$$\quad$Choose edge $(i,j)\in E$ so that $j$ receives
data along $(i,j)$.

07$\quad$$\quad$$y_{j}^{k}=y_{j}^{k}+\sigma_{i,y}^{k}-\rho_{(i,j),y}^{k}$;
$s_{j}^{k}=s_{j}^{k}+\sigma_{i,s}^{k}-\rho_{(i,j),s}^{k}$

08$\quad$$\quad$$\rho_{(i,j),y}^{k}=\sigma_{i,y}^{k}$; $\rho_{(i,j),s}^{k}=\sigma_{i,s}^{k}$

09$\quad$\textbf{$C$ (Update $y_{j}$ and $[\mathbf{z}_{j}]_{j}$
by minimizing dual function)}

10$\quad$$\quad$Choose a node $j\in V$. 

11$\quad$$\quad$$x_{temp}=\frac{1}{s_{j}^{k}}(y_{j}^{k}+[\mathbf{z}_{j}^{k}]_{j})$

12$\quad$$\quad$If $j\in V_{1}$ (i.e., \textbf{$f_{j}(\cdot)$
to be treated as a proximable function}):

13$\quad$$\quad$$\quad$$x_{j}^{k}:=\arg\min_{x}\frac{s_{j}^{k}}{2}\|x_{temp}-x\|^{2}+f_{j}(x)$

14$\quad$$\quad$$\quad$$[\mathbf{z}_{j}^{k}]_{j}=s_{j}^{k}(x_{j}^{k}-x_{temp})$

15$\quad$$\quad$else (if $j\in V_{2}$, i.e., \textbf{$f_{j}(\cdot)$
treated as a subdifferentiable function})

16$\quad$$\quad$$\quad$Recall $f_{j}^{k-1}(\cdot)\leq f_{j}(\cdot)$
is an affine approximate from previous iterations. 

17$\quad$$\quad$$\quad$Let $v_{j}^{k-1}\in\partial f_{j}(x_{j}^{k-1})$.

18$\quad$$\quad$$\quad$Define $\tilde{f}_{j}^{k-1}:\mathbb{R}^{m}\to\mathbb{R}$
by $\tilde{f}_{j}^{k-1}(x):=[v_{j}^{k-1}]^{T}(x-x_{j}^{k-1})+f_{j}(x_{j}^{k-1})$, 

19$\quad$$\quad$$\quad$$x_{j}^{k}:=\arg\min_{x}\big[\max\{f_{j}^{k-1},\tilde{f}_{j}^{k-1}\}(x)+\frac{s_{j}^{k}}{2}\|x-x_{temp}\|^{2}\big].$

20$\quad$$\quad$$\quad$Let $[\mathbf{z}_{j}^{k}]_{j}=s_{j}^{k}(x_{j}^{k}-x_{temp})$

21$\quad$$\quad$$\quad$Let $f_{j}^{k}(x):=[\mathbf{z}_{j}^{k}]_{j}^{T}(x-x_{j}^{k})+\max\{f_{j}^{k-1},\tilde{f}_{j}^{k-1}\}(x_{j}^{k})$. 

22$\quad$$\quad$$\quad$(We then have $x_{j}^{k}=\arg\min_{x}\big[f_{j}^{k}(x)+\frac{s_{j}^{k}}{2}\|x-x_{temp}\|^{2}\big]$
and $f_{j}^{k}(\cdot)\leq f_{j}(\cdot)$)

23$\quad$$\quad$end
\end{lyxalgorithm}

\end{lyxalgorithm}

\end{algorithm}
We now give a short explanation of Algorithms \ref{alg:dir-alg} and
\ref{alg:Op_ABC}, explaining what was being done in \cite{Bof_Carli_Schenato_2017}
and \cite{Pang_dir_1}.
\begin{rem}
(Algorithms \ref{alg:dir-alg} and \ref{alg:Op_ABC}, and \cite{Bof_Carli_Schenato_2017})
Operations $A$ and $B$ of Algorithm \ref{alg:Op_ABC} are described
in \cite{Bof_Carli_Schenato_2017}. When operation $A$ is carried
out, node $i$ sends data to all its out-neighbors. In operation $B$,
a node receives data from its in-neighbors. Even if node $j$ does
not receive information from node $i$ immediately, the information
is delayed and not lost. For each $(i,j)\in E$, the variable $y_{(i,j)}\in\mathbb{R}^{m}$
defined by 
\[
y_{(i,j)}:=\sigma_{i,y}-\rho_{(i,j),y}
\]
 to be the data that is sent by node $i$ but not yet received by
node $j$. If all information from a node is eventually received by
all its out-neighbors and $f_{i}(\cdot)$ are zero for all $i\in V$,
then then \cite{Bof_Carli_Schenato_2017} proved that $y_{i}^{k}/s_{i}^{k}$
converges linearly to $\frac{1}{|V|}\sum_{i\in V}\bar{x}_{i}$ for
all $i\in V$. 
\end{rem}

\begin{figure}[h]
\begin{tabular}{|ccccc|}
\hline 
\begin{tabular}{c}
\includegraphics[scale=0.25]{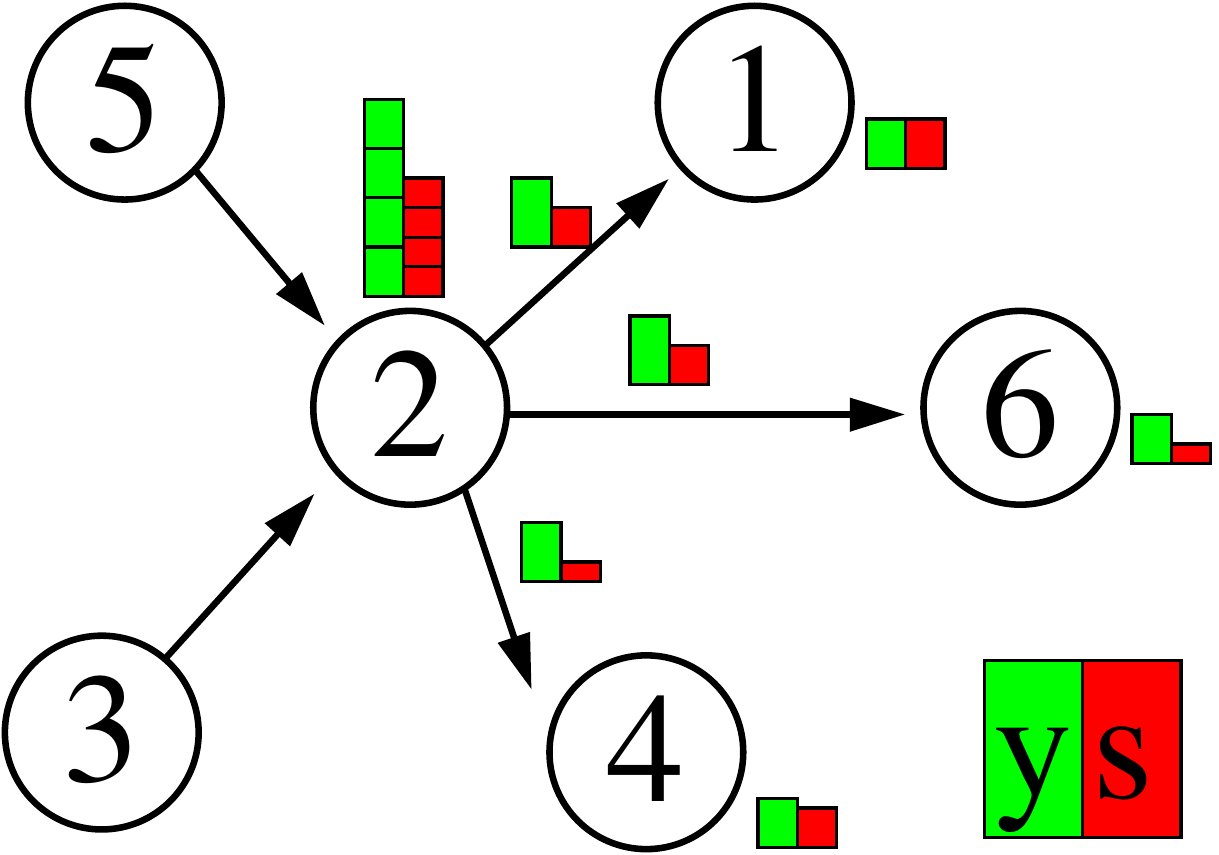}\tabularnewline
\end{tabular} & $\!\!\!\!\!\!\!\!\!\!$%
\begin{tabular}{c}
$\xrightarrow{\scriptsize{\text{Oper. A}}}$\tabularnewline
\end{tabular}$\!\!\!\!\!\!\!\!\!\!\!\!\!\!\!\!\!\!\!$ & %
\begin{tabular}{c}
\includegraphics[scale=0.25]{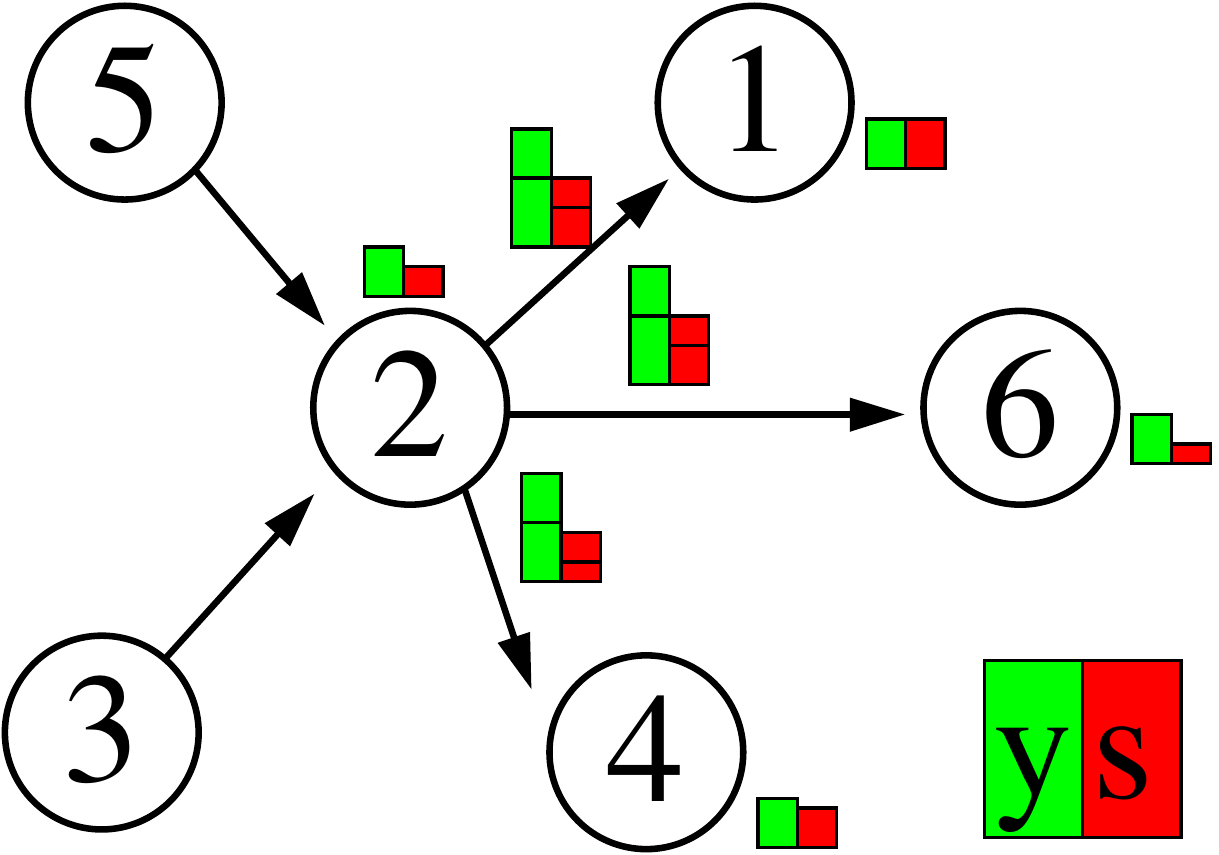}\tabularnewline
\end{tabular} & $\!\!\!\!\!\!\!\!\!$%
\begin{tabular}{c}
$\xrightarrow{\scriptsize{\text{Oper. B}}}$\tabularnewline
\end{tabular}$\!\!\!\!\!\!\!\!\!\!\!\!\!\!\!\!\!\!\!$ & %
\begin{tabular}{c}
\includegraphics[scale=0.25]{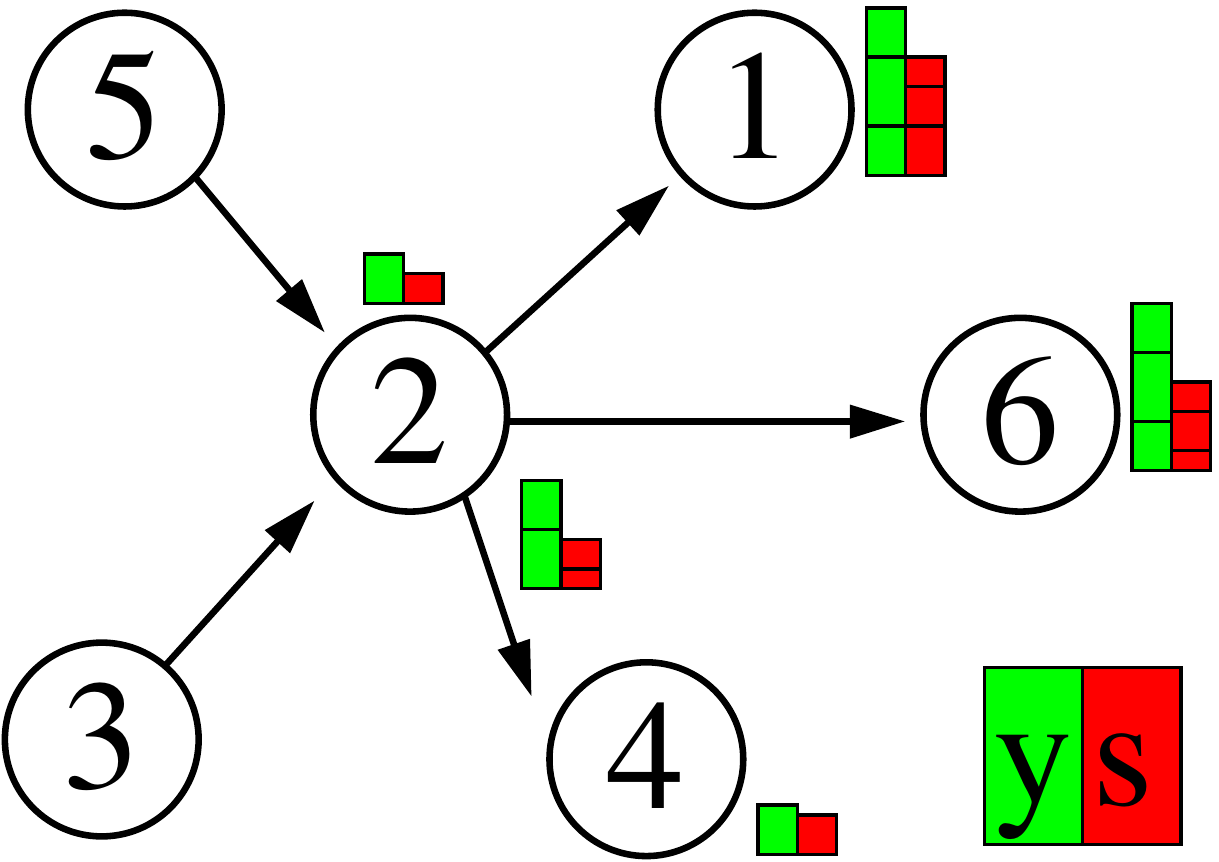}\tabularnewline
\end{tabular}\tabularnewline
\hline 
\multicolumn{5}{|c|}{%
\begin{tabular}{c}
\includegraphics[scale=0.22]{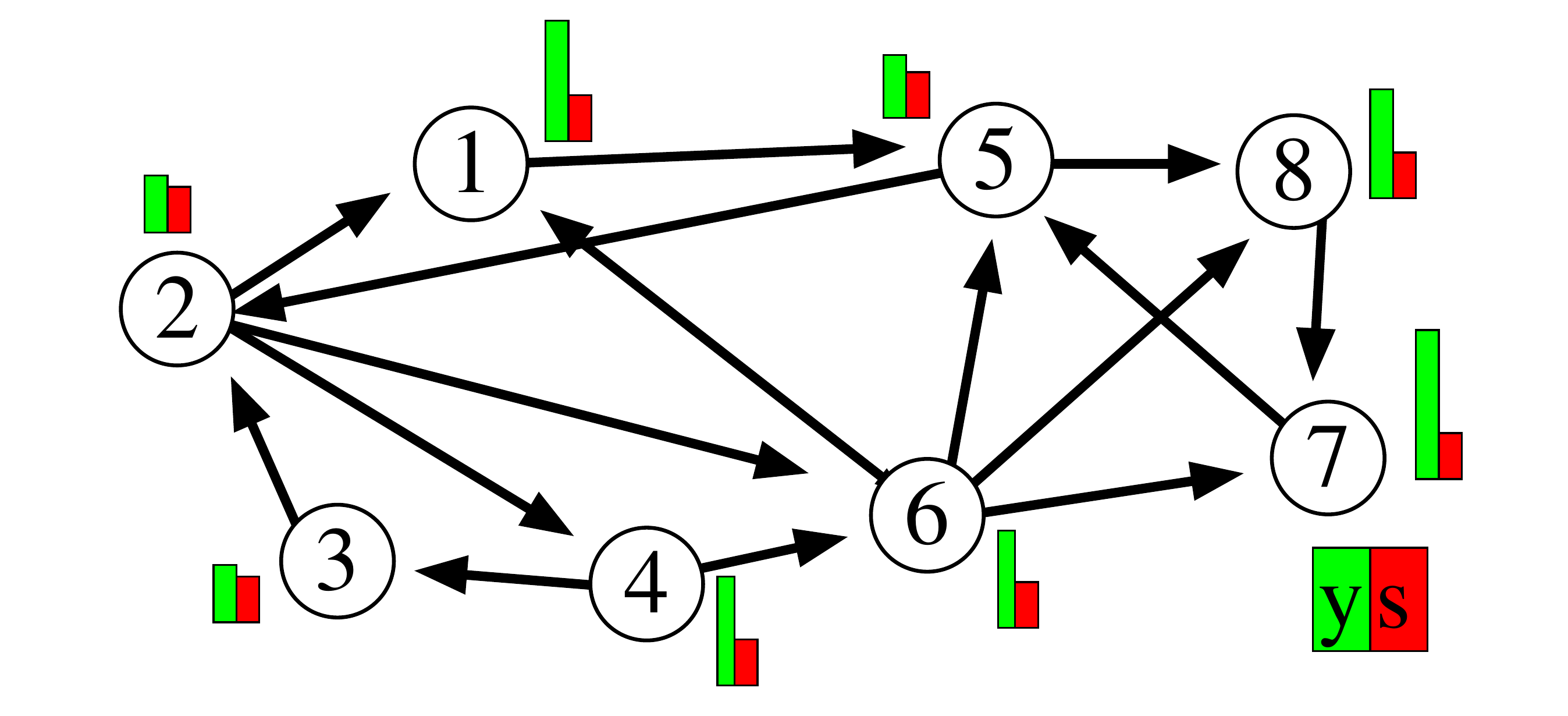}\tabularnewline
\end{tabular}$\!\!\!\!\!\!\!\!\to\!\!\!\!\!\!\!\!\!$%
\begin{tabular}{c}
\includegraphics[scale=0.22]{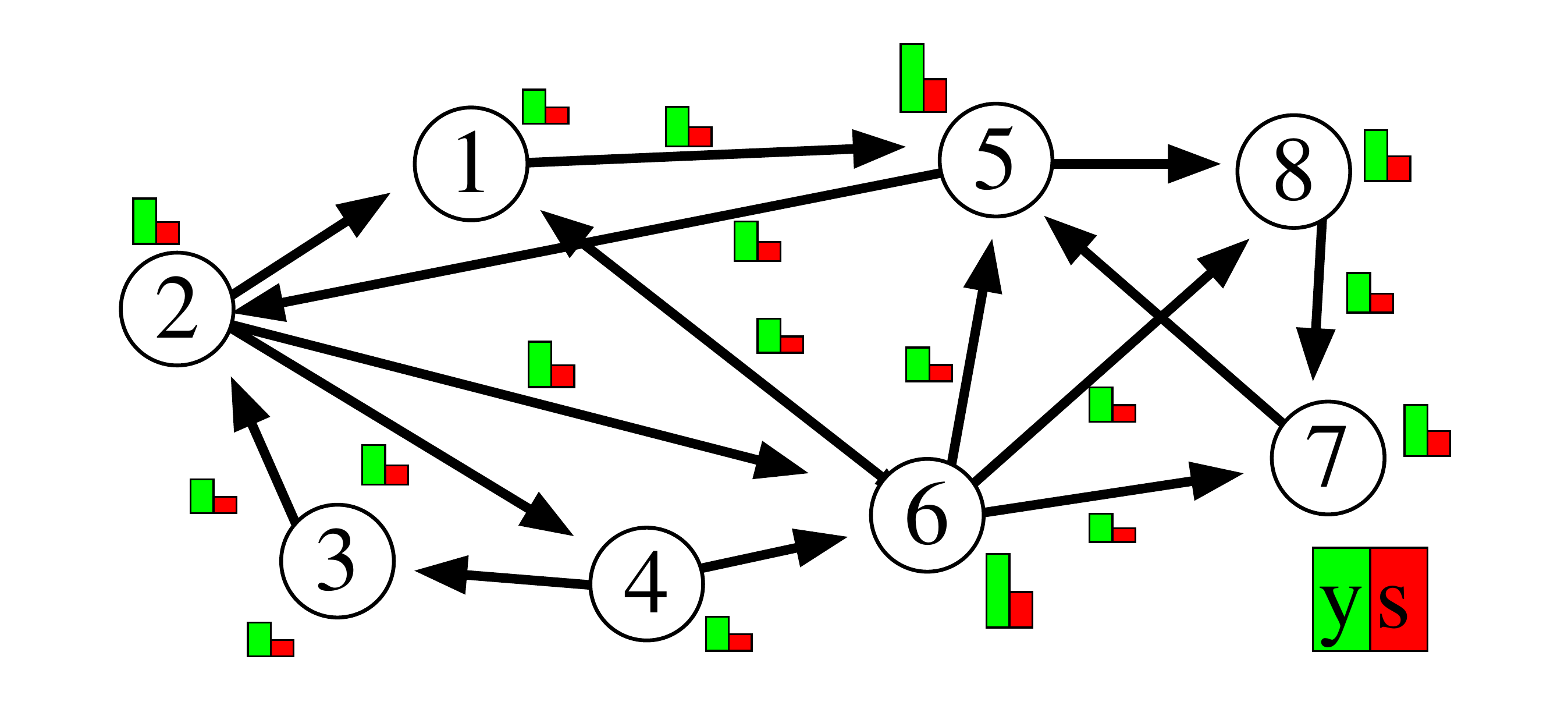}\tabularnewline
\end{tabular}}\tabularnewline
\hline 
\end{tabular}

\caption{The top diagram illustrates Operations $A$ and $B$ in Algorithm
\ref{alg:Op_ABC} due to \cite{Bof_Carli_Schenato_2017}. The bottom
diagram illustrates that in \cite{Bof_Carli_Schenato_2017}, the value
$y_{\alpha}/s_{\alpha}$ converges to the desired average for all
$\alpha\in V\cup E$. }
\end{figure}
\begin{rem}
(Algorithms \ref{alg:dir-alg} and \ref{alg:Op_ABC}, and \cite{Pang_dir_1})
In \cite{Pang_dir_1}, we noticed that the operations $A$ and $B$
reduces the dual objective value in \eqref{eq:dual-3}. We also noticed
that if $f_{i}(\cdot)$ is a proximable function, then lines 13 and
14 in Operation $C$ of Algorithm \ref{alg:Op_ABC} result in a decrease
in the dual objective value \eqref{eq:dual-3}. Lines 16 to 22 of
Operation $C$ incorporates the procedure described in \cite{Pang_sub_Dyk}
to decrease a majorization \eqref{eq:dual-2} of \eqref{eq:dual-3}
for the case when $f_{i}(\cdot)$ is subdifferentiable; this step
allows for a more direct treatment of subdifferentiable functions
$f_{i}(\cdot)$ without having the compute the proximal operations
of lines 13 and 14. Under reasonable conditions, the values $\{x_{\alpha}^{k}\}_{k=0}^{\infty}$
all converge to the optimal primal solution for all $\alpha\in V\cup E$.
We shall show in Theorem \ref{thm:linear-rate} the linear convergence
when all $f_{i}(\cdot)$ are smooth. 
\end{rem}

The following result will be useful for later discussions. 
\begin{prop}
\label{prop:sparsity}(Sparsity) The following results below hold:
\begin{enumerate}
\item If $i\in V$, then $\mathbf{z}_{i}\in[\mathbb{R}^{m}]^{|V\cup E|}$
is such that $[\mathbf{z}_{i}]_{\alpha}=0$ for all $\alpha\in[V\cup E]\backslash\{i\}$. 
\item If $\{\alpha_{1},\alpha_{2}\}\in F$, then $\mathbf{z}_{\{\alpha_{1},\alpha_{2}\}}\in[\mathbb{R}^{m}]^{|V\cup E|}$
is such that $[\mathbf{z}_{\{\alpha_{1},\alpha_{2}\}}]_{\alpha}=0$
for all $\alpha\in[V\cup E]\backslash\{\alpha_{1},\alpha_{2}\}$,
and $[\mathbf{z}_{\{\alpha_{1},\alpha_{2}\}}]_{\alpha_{1}}+[\mathbf{z}_{\{\alpha_{1},\alpha_{2}\}}]_{\alpha_{2}}=0$. 
\end{enumerate}
\end{prop}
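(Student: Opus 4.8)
The plan is to read the proposition as a statement about the \emph{effective domain} of the dual objective \eqref{eq:dual-2}: both sparsity properties are forced by finiteness of the two relevant families of terms, namely the conjugates $\mathbf{f}_i^*$ for part (1) and the indicators $\delta_{H_\beta^\perp}$ for part (2). A dual variable contributing a finite value is automatically sparse in the claimed way, so these structural conditions hold throughout the region where the problem is actually being solved.

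For part (1), I would start from $\mathbf{f}_i(\mathbf{x})=f_i([\mathbf{x}]_i)$, which depends only on the $i$-th block. Writing out the conjugate as $\mathbf{f}_i^*(\mathbf{z}_i)=\sup_{\mathbf{x}}\sum_{\alpha}\langle[\mathbf{z}_i]_\alpha,[\mathbf{x}]_\alpha\rangle-f_i([\mathbf{x}]_i)$, I would split the inner product into the $\alpha=i$ term and the remaining terms. Since $f_i$ constrains only $[\mathbf{x}]_i$, every block $[\mathbf{x}]_\alpha$ with $\alpha\neq i$ ranges freely over $\mathbb{R}^m$; choosing $[\mathbf{x}]_\alpha=t[\mathbf{z}_i]_\alpha$ and letting $t\to\infty$ drives the supremum to $+\infty$ unless $[\mathbf{z}_i]_\alpha=0$. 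Hence finiteness of $\mathbf{f}_i^*(\mathbf{z}_i)$ forces $[\mathbf{z}_i]_\alpha=0$ for all $\alpha\in[V\cup E]\backslash\{i\}$, and in that case one recovers $\mathbf{f}_i^*(\mathbf{z}_i)=f_i^*([\mathbf{z}_i]_i)$.

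For part (2), I would directly compute the orthogonal complement $H_\beta^\perp$ from the defining hyperplane \eqref{eq:set-H-formula}. Testing $\mathbf{z}_\beta$ against two natural families of vectors in $H_\beta=\{\mathbf{x}:[\mathbf{x}]_{\alpha_1}=[\mathbf{x}]_{\alpha_2}\}$ suffices. First, vectors supported on a single coordinate $\alpha\notin\{\alpha_1,\alpha_2\}$ lie in $H_\beta$ since both constrained blocks vanish, and orthogonality against all of them yields $[\mathbf{z}_\beta]_\alpha=0$ for every $\alpha\notin\{\alpha_1,\alpha_2\}$. Second, vectors with $[\mathbf{x}]_{\alpha_1}=[\mathbf{x}]_{\alpha_2}=w$ and all other blocks zero also lie in $H_\beta$, and orthogonality against them gives $\langle[\mathbf{z}_\beta]_{\alpha_1}+[\mathbf{z}_\beta]_{\alpha_2},w\rangle=0$ for all $w\in\mathbb{R}^m$, hence $[\mathbf{z}_\beta]_{\alpha_1}+[\mathbf{z}_\beta]_{\alpha_2}=0$. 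Membership $\mathbf{z}_\beta\in H_\beta^\perp$, i.e.\ finiteness of $\delta_{H_\beta^\perp}(\mathbf{z}_\beta)$, is therefore exactly the claimed structure.

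I do not expect either part to present a genuine obstacle; the entire content is the observation that both sparsity properties are consequences of restricting to the effective domain of the dual objective, and the verifications themselves are elementary. The only point requiring care is to state explicitly at the outset that the proposition describes the dual variables over which the supremum and infimum are effectively taken—those violating the sparse structure contribute $+\infty$ and may be discarded without affecting the optimization.
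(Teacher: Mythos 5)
Your proposal is correct and follows essentially the same route as the paper, whose (deferred) proof rests on exactly the two observations you verify: finiteness of $\mathbf{f}_{i}^{*}(\mathbf{z}_{i})$ forces sparsity because $\mathbf{f}_{i}(\cdot)$ depends only on the $i$-th coordinate, and finiteness of $\delta_{H_{\beta}^{\perp}}(\mathbf{z}_{\beta})$, i.e.\ membership in $H_{\beta}^{\perp}$, forces the structure in part (2). You simply carry out the elementary computations that the paper leaves to the reference \cite{Pang_Dist_Dyk}.
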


\begin{proof}
The proof is elementary and exactly the same as that in \cite{Pang_Dist_Dyk}.
(Part (1) makes use of the fact that $\mathbf{f}_{i}(\cdot)$ depends
on only the $i$-th coordinate of the input, while part (2) makes
use of the fact that $\delta_{H_{\{\alpha_{1},\alpha_{2}\}}}^{*}(\cdot)=\delta_{H_{\{\alpha_{1},\alpha_{2}\}}^{\perp}}(\cdot)$,
and $\delta_{H_{\{\alpha_{1},\alpha_{2}\}}^{\perp}}(\mathbf{z}_{\{\alpha_{1},\alpha_{2}\}})<\infty$
implies the conclusions in (2).)
\end{proof}
The following result is a slight extension of a result in \cite{Pang_sub_Dyk}.
\begin{lem}
\label{lem:subdiff-decrease}\cite{Pang_sub_Dyk} Suppose $f:X\to\mathbb{R}$
is a closed convex subdifferentiable function such that $\dom(f)=X$.
Consider the problem 
\begin{equation}
\begin{array}{c}
\underset{x}{\overset{\phantom{x}}{\min}}\,\,f(x)+\frac{s}{2}\|x-\bar{x}\|^{2},\end{array}\label{eq:lemma-primal}
\end{equation}
which has (Fenchel) dual 
\begin{equation}
\begin{array}{c}
\underset{x}{\overset{\phantom{x}}{\max}}\,\,-f^{*}(z)+\frac{s}{2}\|\bar{x}\|^{2}-\frac{s}{2}\|\frac{1}{s}z-\bar{x}\|^{2}.\end{array}\label{eq:lemma-dual}
\end{equation}
Strong duality is satisfied for this primal dual pair. Let the common
objective value be $v^{*}$. Let $f_{1}:X\to\mathbb{R}$ be an affine
function $f_{1}(x):=a_{1}^{T}x+b_{1}$ such that $f_{1}(\cdot)\leq f(\cdot)$.
We have $f_{1}^{*}(\cdot)\geq f^{*}(\cdot)$. Let $z_{1}$ be the
maximizer of $\max_{z}-f_{1}^{*}(z)+\frac{1}{2}\|\bar{x}\|^{2}-\frac{1}{2}\|z-\bar{x}\|^{2}$,
and let the corresponding solution to the primal problem $\min_{x}f_{1}(x)+\frac{1}{2}\|x-\bar{x}\|^{2}$
be $x_{1}$. Define $\tilde{f}_{1}:X\to\mathbb{R}$ to be an affine
minorant of $f(\cdot)$ at $x_{1}$, i.e., $\tilde{f}_{1}(x)=f(x)+s_{1}^{T}(x-x_{1})$
for some $s_{1}\in\partial f(x_{1})$. Let $x_{2}$ be the minimizer
to the problem 
\begin{equation}
\begin{array}{c}
\underset{x}{\overset{\phantom{x}}{\min}}\,\,[\max\{f_{1}(x),\tilde{f}_{1}(x)\}+\frac{s}{2}\|x-\bar{x}\|^{2}],\end{array}\label{eq:min-max-pblm}
\end{equation}
and let $z_{2}$ be the dual solution. Let $f_{2}:X\to\mathbb{R}$
be the affine function such that the problem \textup{
\[
\begin{array}{c}
\underset{x}{\overset{\phantom{x}}{\min}}f_{2}(x)+\frac{s}{2}\|x-\bar{x}\|^{2}\end{array}
\]
}has the same primal and dual solutions $x_{2}$ and $z_{2}$. Let
\[
\begin{array}{c}
\alpha_{i}=v^{*}-[-f_{i}^{*}(z_{i})+\frac{s}{2}\|\bar{x}\|^{2}-\frac{s}{2}\|\frac{1}{s}z_{i}-\bar{x}\|^{2}]\mbox{ for }i=1,2.\end{array}
\]
One can see that $\alpha_{i}\geq0$, and $\alpha_{i}$ is the measure
of the gap between the estimate of the dual objective value \eqref{eq:lemma-dual}
and its true value $v^{*}$. If $f(\cdot)$ is smooth and $\nabla f(\cdot)$
is Lipschitz with constant $L'$, then 
\begin{equation}
\begin{array}{c}
\frac{1}{4((L'/s)+1)}\left(\frac{\alpha_{2}}{\alpha_{1}}\right)^{2}+\frac{\alpha_{2}}{\alpha_{1}}\leq1.\end{array}\label{eq:lin-dec-subpblm}
\end{equation}
\end{lem}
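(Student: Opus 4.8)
The plan is to translate the two dual gaps $\alpha_1,\alpha_2$ into primal quantities and then play the $s$-strong convexity of the quadratic term against the $L'$-smoothness of $f$. Write $g(x):=f(x)+\frac{s}{2}\|x-\bar{x}\|^{2}$ and let $x^{*}$ minimize $g$, so $v^{*}=g(x^{*})$. By the strong duality built into the statement, the bracket defining $\alpha_{1}$ equals the optimal value $v_{1}:=\min_{x}f_{1}(x)+\frac{s}{2}\|x-\bar{x}\|^{2}$, attained at $x_{1}$, and the bracket defining $\alpha_{2}$ equals the optimal value $v_{2}$ of \eqref{eq:min-max-pblm}, attained at $x_{2}$, with $v_{2}=\max\{f_{1},\tilde{f}_{1}\}(x_{2})+\frac{s}{2}\|x_{2}-\bar{x}\|^{2}$. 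Since $f_{1}(\cdot)\le f(\cdot)$ and $\tilde{f}_{1}(\cdot)$ is a first-order minorant of the convex $f(\cdot)$ (so $\tilde{f}_{1}(\cdot)\le f(\cdot)$, with equality at $x_{1}$), we get $f_{1}\le\max\{f_{1},\tilde{f}_{1}\}\le f$ pointwise, hence $v_{1}\le v_{2}\le v^{*}$ and $\alpha_{1}=v^{*}-v_{1}\ge\alpha_{2}=v^{*}-v_{2}\ge0$. In particular the ratio in \eqref{eq:lin-dec-subpblm} is at most $1$, and I may assume $\alpha_{1}>0$, since otherwise $\alpha_{2}=0$ and \eqref{eq:lin-dec-subpblm} is trivial.

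Next I would produce two estimates whose common currency is $\|x_{2}-x_{1}\|^{2}$. Because $f_{1}(\cdot)+\frac{s}{2}\|\cdot-\bar{x}\|^{2}$ is $s$-strongly convex with minimizer $x_{1}$, evaluating its strong-convexity inequality at $x_{2}$ and using $\max\{f_{1},\tilde{f}_{1}\}\ge f_{1}$ yields the decrease bound
\[
\alpha_{1}-\alpha_{2}=v_{2}-v_{1}\ge\frac{s}{2}\|x_{2}-x_{1}\|^{2}.
\]
On the other hand, since $\tilde{f}_{1}$ is the tangent of $f$ at $x_{1}$ and $\nabla f$ is $L'$-Lipschitz, the descent lemma gives $f(x_{2})\le\tilde{f}_{1}(x_{2})+\frac{L'}{2}\|x_{2}-x_{1}\|^{2}$; together with $v^{*}\le g(x_{2})$ and $\max\{f_{1},\tilde{f}_{1}\}(x_{2})\ge\tilde{f}_{1}(x_{2})$ this gives the gap bound
\[
\alpha_{2}=v^{*}-v_{2}\le g(x_{2})-v_{2}=f(x_{2})-\max\{f_{1},\tilde{f}_{1}\}(x_{2})\le\frac{L'}{2}\|x_{2}-x_{1}\|^{2}.
\]

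Eliminating $\|x_{2}-x_{1}\|^{2}$ between the two displays gives $\alpha_{1}-\alpha_{2}\ge\frac{s}{L'}\alpha_{2}$, i.e.\ $\frac{\alpha_{2}}{\alpha_{1}}\le\frac{L'}{L'+s}$, which is already slightly stronger than \eqref{eq:lin-dec-subpblm}. To recover \eqref{eq:lin-dec-subpblm} itself it then suffices to observe that $t\mapsto\frac{1}{4((L'/s)+1)}t^{2}+t$ is increasing and, at $t=\frac{L'}{L'+s}$, is bounded by $1$ (this reduces to $(L'/s)^{2}\le4((L'/s)+1)^{2}$, which always holds); monotonicity then delivers the claim at the true value $t=\alpha_{2}/\alpha_{1}$. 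The step I expect to be most delicate is the opening bookkeeping: one must check carefully that the affine function $f_{2}$ and the dual point $z_{2}$ reproduce the min--max value $v_{2}$ of \eqref{eq:min-max-pblm}, so that $\alpha_{2}$ really is the primal gap $v^{*}-v_{2}$, and that the tangency $\tilde{f}_{1}(x_{1})=f(x_{1})$ is invoked in the right place. Once these identifications are secured, both inequalities above hold with no case analysis on which of $f_{1},\tilde{f}_{1}$ is active at $x_{2}$.
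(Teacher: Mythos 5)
Your proof is correct, but it is not the paper's proof: the paper disposes of this lemma in two lines, observing that the case $s=1$ is exactly the result established in \cite{Pang_sub_Dyk}, and that general $s$ follows by rescaling --- replacing $f$ by $f/s$ divides both $\alpha_{1}$ and $\alpha_{2}$ by $s$ (so their ratio is unchanged) and turns the Lipschitz constant into $L'/s$, which is precisely how $L'/s$ enters \eqref{eq:lin-dec-subpblm}. You instead give a self-contained argument: after identifying $\alpha_{i}=v^{*}-v_{i}$ with the primal gaps (your ``bookkeeping'' step, which is indeed the right reading of the statement; note that it also forces the normalization $f_{2}(x_{2})=\max\{f_{1},\tilde{f}_{1}\}(x_{2})$, since the condition ``same primal and dual solutions'' alone leaves the constant term of $f_{2}$ undetermined), you play the exact $s$-strong convexity of the quadratic model $f_{1}+\frac{s}{2}\|\cdot-\bar{x}\|^{2}$ against the descent lemma for $f$ at $x_{1}$ to get $\alpha_{1}-\alpha_{2}\geq\frac{s}{2}\|x_{2}-x_{1}\|^{2}$ and $\alpha_{2}\leq\frac{L'}{2}\|x_{2}-x_{1}\|^{2}$, hence $\alpha_{2}/\alpha_{1}\leq L'/(L'+s)$; the stated inequality \eqref{eq:lin-dec-subpblm} then follows from the monotonicity check you describe, and your reduction to $(L'/s)^{2}\leq4((L'/s)+1)^{2}$ is correct. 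As for what each approach buys: the paper's route is essentially free but opaque, since the reader must consult \cite{Pang_sub_Dyk} to see where the constant comes from; your route needs no external reference and in fact yields a strictly sharper conclusion, because $L'/(L'+s)=1-\frac{1}{(L'/s)+1}$ is smaller than the positive root of $\frac{1}{4((L'/s)+1)}t^{2}+t=1$, which is approximately $1-\frac{1}{4((L'/s)+1)}$. The only loose ends are degenerate ones you handle acceptably: when $\alpha_{1}=0$ the ratio in \eqref{eq:lin-dec-subpblm} is undefined and the claim is vacuous, and when $L'=0$ your division by $L'$ should be replaced by noting directly that $\alpha_{2}=0$.
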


\begin{proof}
We note that the case where $s=1$ was already treated in \cite{Pang_sub_Dyk}.
For the case where $s\neq1$, we can look at the function $\frac{f(\cdot)}{s}+\frac{1}{2}\|x-\bar{x}\|^{2}$.
Then $\nabla\left(\frac{f}{s}\right)(\cdot)$ has a Lipschitz constant
of $L'/s$, which gives the formula \eqref{eq:lin-dec-subpblm}.
\end{proof}

\section{\label{sec:Main-result}Main result}

In this section, we prove the linear convergence of Algorithm \ref{alg:dir-alg}. 

Throughout this section, we make the following assumption. 
\begin{assumption}
\label{assu:lin-conv-assu}For the problem \eqref{eq:dual-2}, we
make the following assumptions: 

\begin{enumerate}
\item $\{s_{\alpha}\}_{\alpha\in V\cup E}$ satisfies \eqref{eq:s-sums-to-V}.
\item There are constants $s_{\min},s_{\max}>0$ such that for all $\alpha\in V\cup E$,
if $s_{\alpha}>0$, then $s_{\min}\leq s_{\alpha}\leq s_{\max}$. 
\item For all $i\in V$, $f_{i}^{*}(\cdot)$ is strongly convex with modulus
$\sigma>0$, which is equivalent to $\nabla f_{i}(\cdot)$ being Lipschitz
continuous with constant $\frac{1}{\sigma}$. {[}Note that in general,
$f_{i}(\cdot)$ are subdifferentiable for all $i\in V_{2}$, but we
now limit to only smooth $f_{i}(\cdot)$ for our linear convergence
result.{]} 
\end{enumerate}
\end{assumption}

\subsection{\label{subsec:proof-outline}Outline of proof}

We first give an outline of the proof before proceeding with more
technical details. Since Algorithm \ref{alg:dir-alg} is time invariant,
we can assume that we start with the iterates $\{\mathbf{z}^{0},\mathbf{x}^{0},\mathbf{s}^{0}\}$
and the functions $f_{i}^{0}(\cdot)\leq f_{i}(\cdot)$ for all $i\in V_{2}$.
We assume that in the first iteration to get $\{\mathbf{z}^{1},\mathbf{x}^{1},\mathbf{s}^{1}\}$
and the functions $\{f_{i}^{1}(\cdot)\}_{i\in V_{2}}$, operation
$C$ in Algorithm \ref{alg:Op_ABC} is carried out for all $i\in V$.
(We feel that it is simplest to explain in this manner.) Since Operation
$C$ does not change $\mathbf{s}$, we have $\mathbf{s}^{0}=\mathbf{s}^{1}$. 

We also define $\{\mathbf{z}^{+},\mathbf{x}^{+},\mathbf{s}^{0}\}$
to be obtained from $\{\mathbf{z}^{0},\mathbf{x}^{0},\mathbf{s}^{0}\}$
when operation $C$ is conducted for all nodes $i\in V$, but by assuming
the functions $f_{i}(\cdot)$ to be all proximable (i.e., the first
option in Operation $C$ is performed on all nodes). We make use of
Lemma \ref{lem:subdiff-decrease} to relate between $\tilde{F}_{S}^{1}(\mathbf{z}^{1},\mathbf{x}^{1},\mathbf{s}^{1})$
and $F_{S}(\mathbf{z}^{+},\mathbf{x}^{+},\mathbf{s}^{1})$. 

By the case of $s_{i}=1$ for all $i\in V$ and $s_{\alpha}=0$ for
all $\alpha\in E$, we know that there is a finite dual optimal value,
say $F_{S}^{*}$. We assume that there is a constant $K$ such that
for all edges $(i,j)\in E$, there is a $k\in\{1,\dots,K-1\}$ such
that in iterating from $(\mathbf{z}^{k},\mathbf{x}^{k},\mathbf{s}^{k})$
to $(\mathbf{z}^{k+1},\mathbf{x}^{k+1},\mathbf{s}^{k+1})$, operation
$A$ is conducted on node $i$ and operation $B$ is conducted on
edge $(i,j)$ for some $j\in\Nout(i)$. We then show that there is
some constant $c\in(0,1)$ such that $\tilde{F}_{S}^{K}(\mathbf{z}^{K},\mathbf{x}^{K},\mathbf{s}^{K})-F_{S}^{*}\leq c(\tilde{F}_{S}^{0}(\mathbf{z}^{0},\mathbf{x}^{0},\mathbf{s}^{0})-F_{S}^{*})$,
which gives linear convergence to the optimal dual objective value. 

For convenience, we introduce $\mathbf{f}_{\alpha}:[\mathbb{R}^{m}]^{|V\cup E|}\to\mathbb{R}\cup\{\infty\}$
for $\alpha\in E$ defined by $\mathbf{f}_{\alpha}(\cdot)=0$ so that
$\mathbf{f}_{\alpha}^{*}(\cdot)=\delta_{\{0\}}(\cdot)$ for all $\alpha\in E$.
For all $\alpha\in E$, the corresponding variable $\mathbf{z}_{\alpha}$
would satisfy $\delta_{\{0\}}(\mathbf{z}_{\alpha})$ being finite,
which would result in 
\begin{equation}
\mathbf{f}_{\alpha}(\cdot)=0\text{ and }\mathbf{z}_{\alpha}=0\text{ for all }\alpha\in E\text{ throughout.}\label{eq:z-zero-throughout}
\end{equation}
Let $x^{*}\in\mathbb{R}^{m}$ be the optimal solution to \eqref{eq:dist_opt_pblm},
and let $\mathbf{x}^{*}\in[\mathbb{R}^{m}]^{|V\cup E|}$ be such that
all $|V\cup E|$ components of $\mathbf{x}^{*}$ are $x^{*}$. Fenchel
duality gives us \begin{subequations}\label{eq-m:Fenchel-m}
\begin{eqnarray}
\langle\mathbf{x}^{*},\mathbf{z}_{i}^{k}\rangle & \leq & \mathbf{f}_{i}(\mathbf{x}^{*})+\mathbf{f}_{i}^{*}(\mathbf{z}_{i}^{k})\leq\mathbf{f}_{i}(\mathbf{x}^{*})+[\mathbf{f}_{i}^{k}]^{*}(\mathbf{z}_{i}^{k})\text{ for all }i\in V\label{eq:Fenchel-ineq}\\
\text{ and }\langle\mathbf{x}^{*},\mathbf{z}_{\beta}^{k}\rangle & \leq & \delta_{H_{\beta}}(\mathbf{x}^{*})+\delta_{H_{\beta}^{\perp}}(\mathbf{z}_{\beta}^{k})\text{ for all }\beta\in F.\label{eq:Fenchel-ineq-2}
\end{eqnarray}
\end{subequations}For convenience, we define $\mathbf{v}_{H}\in[\mathbb{R}^{m}]^{|V\cup E|}$
as 
\begin{equation}
\begin{array}{c}
\mathbf{v}_{H}:=\underset{\beta\in F}{\overset{\phantom{\beta\in F}}{\sum}}\mathbf{z}_{\beta}.\end{array}\label{eq:v-H}
\end{equation}
Similar to the techniques that we used in \cite{Pang_Dist_Dyk,Pang_dir_1}
that can be traced back to \cite{Gaffke_Mathar}, the duality gap
(in the first line of \eqref{eq:big-formula} below, which is the
optimal value of \eqref{eq:2nd-primal} minus the value of the dual
problem \eqref{eq:dual-1}) satisfies
\begin{eqnarray}
 &  & \sum_{\alpha\in V\cup E}\frac{s_{\alpha}^{k}}{2}\|x^{*}-\bar{m}\|^{2}+\sum_{i\in V}\mathbf{f}_{i}(\mathbf{x}^{*})+\sum_{\beta\in F}\delta_{H_{\beta}}(\mathbf{x}^{*})-\frac{|V|}{2}\|\bar{m}\|^{2}\nonumber \\
 &  & +\sum_{i\in V}[\mathbf{f}_{i}^{k}]^{*}(\mathbf{z}_{i}^{k})+\sum_{\beta\in F}\delta_{H_{\beta}^{\perp}}(\mathbf{z}_{\beta}^{k})+\sum_{\alpha\in V\cup E}\frac{s_{\alpha}^{k}}{2}\left\Vert \bar{m}-\frac{1}{s_{\alpha}^{k}}[\mathbf{v}_{H}^{k}-\mathbf{z}_{\alpha}]_{\alpha}\right\Vert ^{2}\nonumber \\
 & \overset{\eqref{eq-m:Fenchel-m}}{\geq} & \left\langle \mathbf{x}^{*},\sum_{\alpha\in V\cup F}\mathbf{z}_{\alpha}^{k}\right\rangle +\sum_{\alpha\in V\cup E}s_{\alpha}^{k}\left(\frac{1}{2}\|x^{*}-\bar{m}\|^{2}+\frac{1}{2}\left\Vert \bar{m}-\frac{1}{s_{\alpha}^{k}}[\mathbf{v}_{H}^{k}-\mathbf{z}_{\alpha}]_{\alpha}\right\Vert ^{2}-\frac{1}{2}\|\bar{m}\|^{2}\right)\nonumber \\
 & = & \sum_{\alpha\in V\cup E}s_{\alpha}^{k}\left(\left\langle x^{*},\frac{1}{s_{\alpha}^{k}}[\mathbf{v}_{H}^{k}-\mathbf{z}_{\alpha}]_{\alpha}\right\rangle +\frac{1}{2}\|x^{*}\|^{2}-\left\langle x^{*},\bar{m}\right\rangle +\frac{1}{2}\left\Vert \bar{m}-\frac{1}{s_{\alpha}^{k}}[\mathbf{v}_{H}^{k}-\mathbf{z}_{\alpha}]_{\alpha}\right\Vert ^{2}\right)\nonumber \\
 & = & \sum_{\alpha\in V\cup E}\frac{s_{\alpha}^{k}}{2}\left\Vert x^{*}-\left(\bar{m}-\frac{1}{s_{\alpha}^{k}}[\mathbf{v}_{H}^{k}-\mathbf{z}_{\alpha}]_{\alpha}\right)\right\Vert ^{2}\overset{\eqref{eq:x-alpha-form},\eqref{eq:v-H}}{=}\sum_{\alpha\in V\cup E}\frac{s_{\alpha}^{k}}{2}\left\Vert x^{*}-x_{\alpha}^{k}\right\Vert ^{2}.\label{eq:big-formula}
\end{eqnarray}
We will prove that the duality gap converges to zero at a linear rate
in Theorem \ref{thm:linear-rate}. Thus, by \eqref{eq:big-formula}
and Assumption \ref{assu:lin-conv-assu}(2), $\{x_{i}^{k}\}_{k}$
converges to $x^{*}$ at a linear rate for all $i\in V$. 

\subsection{The proof}

We write $\hat{z}^{0}\in\mathbb{R}^{m}$ as 
\begin{eqnarray}
\hat{z}^{0} & := & \begin{array}{c}
-\frac{1}{|V|}\underset{\alpha\in V\cup E}{\overset{\phantom{i\in V}}{\sum}}s_{\alpha}^{0}x_{\alpha}^{0}\overset{\eqref{eq:x-alpha-form}}{=}-\frac{1}{|V|}\underset{\alpha\in V\cup E}{\overset{\phantom{i\in V}}{\sum}}s_{\alpha}^{0}\Big[\bar{m}-\frac{1}{s_{\alpha}^{0}}\big[\underset{\alpha_{2}\in V\cup F}{\sum}\mathbf{z}_{\alpha_{2}}^{0}\big]_{\alpha}\Big]\end{array}\nonumber \\
 & \overset{\scriptsize{\mbox{Prop }\ref{prop:sparsity}}}{=} & \begin{array}{c}
-\frac{1}{|V|}\underset{\alpha\in V\cup E}{\overset{\phantom{i\in V}}{\sum}}\Big[s_{\alpha}^{0}\bar{m}-[\mathbf{z}_{\alpha}^{0}]_{\alpha}\Big]\overset{\eqref{eq:s-sums-to-V}}{=}-\bar{m}+\frac{1}{|V|}\underset{\alpha\in V\cup E}{\overset{\phantom{i\in V}}{\sum}}[\mathbf{z}_{\alpha}^{0}]_{\alpha}.\end{array}\label{eq:z-hat-zero}
\end{eqnarray}
Let $\mathbf{z}^{*}\in[[\mathbb{R}^{m}]^{|V\cup E]}]^{|V\cup F|}$
be an optimal solution to the dual problem $F_{S}(\cdot,\mathbf{s}^{0})$,
and let $\hat{z}^{*}\in\mathbb{R}^{m}$ be defined in a similar manner
to \eqref{eq:z-hat-zero} to be $\hat{z}^{*}=-\bar{m}+\frac{1}{|V|}\sum_{\alpha\in V\cup E}[\mathbf{z}_{\alpha}^{*}]_{\alpha}$.
We have 
\begin{equation}
\begin{array}{c}
|V|(\hat{z}^{0}-\hat{z}^{*})\overset{\eqref{eq:z-hat-zero}}{=}\underset{\alpha\in V\cup E}{\overset{\phantom{\alpha\in V\cup E}}{\sum}}[\mathbf{z}_{\alpha}^{0}-\mathbf{z}_{\alpha}^{*}]_{\alpha}\overset{\eqref{eq:z-zero-throughout}}{=}\underset{\alpha\in V}{\overset{\phantom{\alpha\in V\cup E}}{\sum}}[\mathbf{z}_{\alpha}^{0}-\mathbf{z}_{\alpha}^{*}]_{\alpha}.\end{array}\label{eq:sum-z-to-V}
\end{equation}
We define $\mathbf{e}\in[\mathbb{R}^{m}]^{|V\cup E|}$ so that 
\begin{equation}
\begin{array}{c}
-\hat{z}^{0}+[\mathbf{e}]_{\alpha}=x_{\alpha}^{0}\overset{\scriptsize{\text{\eqref{eq:x-alpha-form},\eqref{eq:v-H},Prop \ref{prop:sparsity}}}}{=}\bar{m}-\frac{1}{s_{\alpha}^{0}}[\mathbf{v}_{H}^{0}+\mathbf{z}_{\alpha}^{0}]_{\alpha}.\end{array}\label{eq:def-e}
\end{equation}
Then $s_{\alpha}^{0}[\mathbf{e}]_{\alpha}\overset{\eqref{eq:def-e}}{=}s_{\alpha}^{0}\bar{m}-[\mathbf{v}_{H}^{0}+\mathbf{z}_{\alpha}^{0}]_{\alpha}+s_{\alpha}^{0}\hat{z}^{0}$
and so 
\begin{equation}
\sum_{\alpha\in V\cup E}s_{\alpha}^{0}[\mathbf{e}]_{\alpha}\overset{\eqref{eq:s-sums-to-V},\eqref{eq:v-H}}{=}|V|\bar{m}-\sum_{\alpha\in V\cup E}[\mathbf{z}_{\alpha}^{0}]_{\alpha}+|V|\hat{z}^{0}\overset{\eqref{eq:z-hat-zero}}{=}0.\label{eq:s-e-i-zero}
\end{equation}
The value $\tilde{F}_{S}^{0}(\mathbf{z}^{0},\mathbf{x}^{0},\mathbf{s}^{0})$
can be written as 
\begin{equation}
\begin{array}{c}
\tilde{F}_{S}^{0}(\mathbf{z}^{0},\mathbf{x}^{0},\mathbf{s}^{0})\overset{\eqref{eq:dual-3},\eqref{eq:def-e}}{=}\underset{\alpha\in V\cup E}{\overset{\phantom{i\in V}}{\sum}}\left[[f_{\alpha}^{0}]^{*}([\mathbf{z}_{\alpha}^{0}]_{\alpha})+\frac{s_{\alpha}^{0}}{2}\|-\hat{z}^{0}+[\mathbf{e}]_{i}\|^{2}\right].\end{array}\label{eq:F-tilde-S}
\end{equation}
Let $\mathbf{z}^{*}$ be a minimizer of $F_{S}(\cdot,\mathbf{s}^{0})$.
The strong convexity of the $f_{i}^{*}(\cdot)$ for all $i\in V$
ensures that \textbf{$\mathbf{z}_{i}^{*}$ }is unique if $i\in V$.
(Though $\mathbf{z}_{\alpha}^{*}$ need not be unique if $\alpha\in F$.)
Since $x^{*}$ is the optimal primal solution, we have $\hat{z}^{*}=-\frac{1}{|V|}\sum_{\alpha\in V\cup E}s_{\alpha}^{0}x^{*}\overset{\eqref{eq:s-sums-to-V}}{=}-x^{*}$.
So the unique solution has the value 
\begin{equation}
\begin{array}{c}
F_{S}^{*}:=F_{S}(\mathbf{z}^{*},\mathbf{s}^{0})\overset{\eqref{eq:dual-2}}{=}\underset{\alpha\in V\cup E}{\overset{\phantom{i\in V}}{\sum}}\left[f_{\alpha}^{*}([\mathbf{z}_{\alpha}^{*}]_{\alpha})+\frac{s_{\alpha}^{0}}{2}\|-\hat{z}^{*}\|^{2}\right].\end{array}\label{eq:def-F-star-s}
\end{equation}

\begin{lem}
\label{lem:gamma-ineq}Suppose Assumption \ref{assu:lin-conv-assu}
holds. Suppose $\{\mathbf{z}_{\alpha}^{0}\}_{\alpha\in V\cup F}$
is a dual variable, and let the derived variables $\mathbf{x}^{0}$,
\textbf{$\mathbf{v}_{H}^{0}$} and $\mathbf{e}$ be as defined in
the above commentary. For all $i\in V$, let $\mathbf{z}_{i}^{+}\in[\mathbb{R}^{m}]^{|V\cup E|}$
be defined so that $[\mathbf{z}_{i}^{+}]_{\alpha}=0$ when $\alpha\neq i$
and 
\begin{equation}
\begin{array}{c}
[\mathbf{z}_{i}^{+}]_{i}=\underset{z\in\mathbb{R}^{m}}{\overset{\phantom{z\in\mathbb{R}^{m}}}{\arg\min}}f_{i}^{*}(z)+\frac{s_{i}^{0}}{2}\|-\hat{z}^{0}+[\mathbf{e}]_{i}+\frac{1}{s_{i}^{0}}[[\mathbf{z}_{i}^{0}]_{i}-z]\|^{2}\mbox{ for all }i\in V.\end{array}\label{eq:z-plus-minimizer}
\end{equation}
For all $i\in V$, let $\tilde{f}_{i}:\mathbb{R}^{m}\to\mathbb{R}$
and $\tilde{\mathbf{f}}_{i}:[\mathbb{R}^{m}]^{|V\cup E|}\to\mathbb{R}$
be related through $\tilde{\mathbf{f}}_{i}(\mathbf{x})=\tilde{f}_{i}([\mathbf{x}]_{i})$.
Assume also that $f_{i}^{0}(\cdot)\leq f_{i}(\cdot)$, which is equivalent
to $[f_{i}^{0}]^{*}(\cdot)\geq f_{i}^{*}(\cdot)$. Let $\tilde{F}_{S}(\cdot,\mathbf{s}):[[\mathbb{R}^{m}]^{|V\cup E|}]^{|V\cup F|}\to\mathbb{R}\cup\{\infty\}$
be defined in a manner similar to \eqref{eq:dual-3} as 
\begin{equation}
\begin{array}{c}
\tilde{F}_{S}^{0}(\mathbf{z},\mathbf{s}):=\underset{i\in V}{\sum}[\tilde{\mathbf{f}}_{i}^{0}]^{*}(\mathbf{z}_{i})+\underset{\beta\in F}{\sum}\delta_{H_{\beta}}^{*}(\mathbf{z}_{\beta})+\underset{\alpha\in V\cup E}{\sum}\frac{s_{\alpha}^{0}}{2}\Big\|\bar{m}-\frac{1}{s_{\alpha}^{0}}[\underset{\alpha_{2}\in V\cup F}{\sum}\mathbf{z}_{\alpha_{2}}]_{\alpha}\Big\|^{2}.\end{array}\label{eq:F-S-tilde}
\end{equation}
Then recalling \eqref{eq:z-zero-throughout}, one can check that 
\begin{align*}
 & \begin{array}{c}
F_{S}(\{\mathbf{z}_{i}\}_{i\in V},\{\mathbf{z}_{\alpha}^{0}\}_{\alpha\in F},\mathbf{s}^{0})=\underset{\alpha\in V\cup E,s_{\alpha}^{0}>0}{\sum}\left[f_{\alpha}^{*}([\mathbf{z}_{\alpha}]_{\alpha})+\frac{s_{\alpha}^{0}}{2}\|-\hat{z}^{0}+[\mathbf{e}]_{\alpha}+\frac{1}{s_{\alpha}^{0}}[\mathbf{z}_{\alpha}^{0}-\mathbf{z}_{\alpha}]_{\alpha}\|^{2}\right]\end{array}\\
\mbox{and} & \begin{array}{c}
\tilde{F}_{S}^{0}(\{\mathbf{z}_{i}\}_{i\in V},\{\mathbf{z}_{\alpha}^{0}\}_{\alpha\in F},\mathbf{s}^{0})=\underset{\alpha\in V\cup E,s_{\alpha}^{0}>0}{\overset{\phantom{\alpha\in V\cup E,s_{\alpha}^{0}>0}}{\sum}}\left[[f_{\alpha}^{0}]^{*}([\mathbf{z}_{\alpha}]_{\alpha})+\frac{s_{\alpha}^{0}}{2}\|-\hat{z}^{0}+[\mathbf{e}]_{\alpha}+\frac{1}{s_{\alpha}^{0}}[\mathbf{z}_{\alpha}^{0}-\mathbf{z}_{\alpha}]_{\alpha}\|^{2}\right].\end{array}
\end{align*}
Let $\mathbf{z}^{*}$ be a minimizer of $F_{S}(\cdot,\mathbf{s}^{0})$,
and let $\mathbf{z}_{\beta}^{+}=\mathbf{z}_{\beta}^{0}$ for all $\beta\in F$.
Then there exists constants $\gamma\in(0,1)$ and $M>0$ such that
if $\mathbf{z}^{0}$ and $\mathbf{z}^{+}$ are related as described,
then 
\begin{eqnarray}
 &  & \begin{array}{c}
F_{S}(\mathbf{z}^{+},\mathbf{s}^{0})-F_{S}(\mathbf{z}^{*},\mathbf{s}^{0})\end{array}\label{eq:lemma-lin-conv-last}\\
 & \leq & \begin{array}{c}
\gamma[F_{S}(\mathbf{z}^{0},\mathbf{s}^{0})-F_{S}(\mathbf{z}^{*},\mathbf{s}^{0})]+M\underset{\alpha\in V\cup E,s_{\alpha}^{0}>0}{\overset{\phantom{V\cup s_{\alpha}^{0}E}}{\sum}}\|[\mathbf{e}]_{\alpha}\|^{2}\end{array}\nonumber \\
 & \leq & \begin{array}{c}
\gamma[\tilde{F}_{S}^{0}(\mathbf{z}^{0},\mathbf{s}^{0})-F_{S}(\mathbf{z}^{*},\mathbf{s}^{0})]+M\underset{\alpha\in V\cup E,s_{\alpha}^{0}>0}{\overset{\phantom{V\cup s_{\alpha}^{0}E}}{\sum}}\|[\mathbf{e}]_{\alpha}\|^{2}.\end{array}\nonumber 
\end{eqnarray}
\end{lem}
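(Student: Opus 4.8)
The plan is to exploit the separable structure displayed in the statement: with the hyperplane variables frozen at $\{\mathbf z_\beta^0\}_{\beta\in F}$, the function $\phi(\{\mathbf z_i\}_{i\in V}):=F_S(\{\mathbf z_i\}_{i\in V},\{\mathbf z_\beta^0\}_{\beta\in F},\mathbf s^0)$ splits into a sum of independent single-node problems, and \eqref{eq:z-plus-minimizer} says precisely that $\mathbf z^+$ minimizes $\phi$ coordinatewise, so that $F_S(\mathbf z^+,\mathbf s^0)=\min_{\{\mathbf z_i\}}\phi(\{\mathbf z_i\})$. Being a sum of the strongly convex conjugates $f_i^*(\cdot)$ (Assumption \ref{assu:lin-conv-assu}(3)) and the quadratic penalties, $\phi$ is $\mu$-strongly convex with $\mu\ge\sigma+\frac{1}{s_{\max}}$. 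Throughout I write $\mathbf z_V$ for the node block $\{\mathbf z_i\}_{i\in V}$.

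First I would test the minimality of $\mathbf z^+$ against the convex combination $\mathbf z_V^\lambda:=(1-\lambda)\mathbf z_V^0+\lambda\mathbf z_V^*$, obtaining from strong convexity of $\phi$
\[
F_S(\mathbf z^+,\mathbf s^0)\le\phi(\mathbf z_V^\lambda)\le(1-\lambda)F_S(\mathbf z^0,\mathbf s^0)+\lambda\,\phi(\mathbf z_V^*)-\frac{\mu}{2}\lambda(1-\lambda)\|\mathbf z_V^0-\mathbf z_V^*\|^2 .
\]
Subtracting $F_S^*:=F_S(\mathbf z^*,\mathbf s^0)$, the first term yields the contraction factor $\gamma=1-\lambda$, and the claim reduces to controlling the ``frozen-edge'' gap $\phi(\mathbf z_V^*)-F_S^*$ incurred by using the optimal node variables alongside the initial hyperplane variables.

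Next I would evaluate this gap exactly. Writing $x_\alpha^{0*}:=-\hat z^0+[\mathbf e]_\alpha+\frac{1}{s_\alpha^0}[\mathbf z_\alpha^0-\mathbf z_\alpha^*]_\alpha$ for the primal value at $(\mathbf z_V^*,\{\mathbf z_\beta^0\})$ and recalling $x^*=-\hat z^*$, the separable formula together with \eqref{eq:def-F-star-s} give $\phi(\mathbf z_V^*)-F_S^*=\sum_\alpha\frac{s_\alpha^0}{2}(\|x_\alpha^{0*}\|^2-\|x^*\|^2)$. The cross term in the expansion is $\langle x^*,\sum_\alpha s_\alpha^0(x_\alpha^{0*}-x^*)\rangle$, and using $\sum_\alpha s_\alpha^0[\mathbf e]_\alpha=0$ from \eqref{eq:s-e-i-zero} together with $\sum_{\alpha\in V}[\mathbf z_\alpha^0-\mathbf z_\alpha^*]_\alpha=|V|(\hat z^0-\hat z^*)$ from \eqref{eq:sum-z-to-V}, one checks that $\sum_\alpha s_\alpha^0(x_\alpha^{0*}-x^*)=0$, exactly as in the cancellation leading to \eqref{eq:big-formula}. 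Hence the cross term vanishes and $\phi(\mathbf z_V^*)-F_S^*=\sum_\alpha\frac{s_\alpha^0}{2}\|x_\alpha^{0*}-x^*\|^2$, where $x_\alpha^{0*}-x^*=(\hat z^*-\hat z^0)+[\mathbf e]_\alpha+\frac{1}{s_\alpha^0}[\mathbf z_\alpha^0-\mathbf z_\alpha^*]_\alpha$.

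Finally I would bound this sum by $C_1\sum_\alpha\|[\mathbf e]_\alpha\|^2+C_2\|\mathbf z_V^0-\mathbf z_V^*\|^2$, folding the $\|\hat z^*-\hat z^0\|^2$ contribution into the second term through \eqref{eq:sum-z-to-V} and Jensen's inequality, and then absorb the $C_2\|\mathbf z_V^0-\mathbf z_V^*\|^2$ term into the strong-convexity deficit $\frac{\mu}{2}\lambda(1-\lambda)\|\mathbf z_V^0-\mathbf z_V^*\|^2$ by choosing $\lambda\in(0,1)$ small enough that $\lambda C_2\le\frac{\mu}{2}\lambda(1-\lambda)$; dropping the resulting nonpositive remainder gives the first inequality with $\gamma=1-\lambda\in(0,1)$ and $M=\lambda C_1>0$. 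The second inequality is then immediate: since $f_i^0(\cdot)\le f_i(\cdot)$ gives $[f_i^0]^*(\cdot)\ge f_i^*(\cdot)$, we have $\tilde F_S^0(\mathbf z^0,\mathbf s^0)\ge F_S(\mathbf z^0,\mathbf s^0)$, and as $\gamma>0$ the bound only increases when $F_S(\mathbf z^0,\mathbf s^0)$ is replaced by $\tilde F_S^0(\mathbf z^0,\mathbf s^0)$. The main obstacle is precisely this last absorption: a crude split of $x_\alpha^{0*}-x^*$ inflates $C_2$ past $\frac{\mu}{2}$ and destroys $\gamma<1$, so one must estimate $\sum_\alpha\frac{s_\alpha^0}{2}\|x_\alpha^{0*}-x^*\|^2$ sharply, exploiting the full modulus $\mu\ge\sigma+\frac{1}{s_{\max}}$ and the bounds $s_{\min}\le s_\alpha^0\le s_{\max}$ of Assumption \ref{assu:lin-conv-assu}(2), to guarantee that an admissible $\lambda$ exists.
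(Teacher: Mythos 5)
Your route is genuinely different from the paper's (the paper works from the first-order optimality conditions at $\mathbf{z}^{+}$, applies strong convexity of $f_{\alpha}^{*}$ twice to compare $\mathbf{z}^{0}$ and $\mathbf{z}^{*}$ against $\mathbf{z}^{+}$, and then takes a weighted combination of the two resulting inequalities), and the parts of your argument that you carry out are correct: \eqref{eq:z-plus-minimizer} does say that $\mathbf{z}^{+}$ minimizes the node block of $F_{S}$ with the hyperplane block frozen, the cancellation $\sum_{\alpha}s_{\alpha}^{0}(x_{\alpha}^{0*}-x^{*})=0$ follows from \eqref{eq:s-e-i-zero} and \eqref{eq:sum-z-to-V} exactly as you claim, and the frozen-edge gap equals $\sum_{\alpha}\frac{s_{\alpha}^{0}}{2}\|(\hat{z}^{*}-\hat{z}^{0})+[\mathbf{e}]_{\alpha}+\frac{1}{s_{\alpha}^{0}}[\mathbf{z}_{\alpha}^{0}-\mathbf{z}_{\alpha}^{*}]_{\alpha}\|^{2}$. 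However, the step you defer to the end is a genuine gap, and it cannot be closed in the form you propose. Expanding that gap (the terms involving $\hat{z}^{*}-\hat{z}^{0}$ collapse to $-\frac{|V|}{2}\|\hat{z}^{0}-\hat{z}^{*}\|^{2}\leq0$ by \eqref{eq:s-e-i-zero} and \eqref{eq:sum-z-to-V}), its leading term is $\sum_{i\in V}\frac{1}{2s_{i}^{0}}\|d_{i}\|^{2}$ with $d_{i}=[\mathbf{z}_{i}^{0}-\mathbf{z}_{i}^{*}]_{i}$. Your absorption against the uniform deficit $\frac{\mu}{2}\lambda(1-\lambda)\|\mathbf{z}_{V}^{0}-\mathbf{z}_{V}^{*}\|^{2}$ with $\mu=\sigma+\frac{1}{s_{\max}}$ therefore requires roughly $\frac{1}{s_{\min}}\leq(1-\lambda)(\sigma+\frac{1}{s_{\max}})$. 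Now take $\mathbf{e}=0$ and $\mathbf{z}^{0}$ differing from $\mathbf{z}^{*}$ only at two nodes with $s_{i}^{0}=s_{\min}$, with opposite differences (so $\hat{z}^{0}=\hat{z}^{*}$): the gap is then exactly $\frac{1}{2s_{\min}}\|\mathbf{z}_{V}^{0}-\mathbf{z}_{V}^{*}\|^{2}$, and whenever $\frac{1}{s_{\min}}>\sigma+\frac{1}{s_{\max}}$ (which Assumption \ref{assu:lin-conv-assu} permits, e.g.\ $\sigma=0.1$, $s_{\min}=1$, $s_{\max}=2$) it exceeds $\frac{\mu}{2}(1-\lambda)\|\mathbf{z}_{V}^{0}-\mathbf{z}_{V}^{*}\|^{2}$ for every $\lambda\in(0,1)$ and every $C_{1}$. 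So no amount of ``sharp estimation with the full modulus $\sigma+1/s_{\max}$'' produces an admissible $\lambda$; the strategy as stated fails.

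The repair is to apply strong convexity \emph{blockwise} rather than with a uniform modulus: block $i$ of the frozen function is $(\sigma+\frac{1}{s_{i}^{0}})$-strongly convex, so the deficit is $\lambda(1-\lambda)\sum_{i}\frac{\sigma+1/s_{i}^{0}}{2}\|d_{i}\|^{2}$, with the modulus matched node by node to the offending coefficient $\frac{1}{2s_{i}^{0}}$. After Young's inequality with parameter $\epsilon=\sigma/2$ on the cross terms $\sum_{i}\langle[\mathbf{e}]_{i},d_{i}\rangle$, the gap is at most $\sum_{i}(\frac{1}{2s_{i}^{0}}+\frac{\sigma}{4})\|d_{i}\|^{2}+\sum_{\alpha}(\frac{s_{\alpha}^{0}}{2}+\frac{1}{\sigma})\|[\mathbf{e}]_{\alpha}\|^{2}$, and block by block $\big(\frac{1}{2s_{i}^{0}}+\frac{\sigma}{4}\big)\big/\frac{\sigma+1/s_{i}^{0}}{2}=\frac{1}{2}+\frac{1}{2(1+\sigma s_{i}^{0})}\leq\frac{1}{2}+\frac{1}{2(1+\sigma s_{\min})}<1$, so $\lambda=\frac{\sigma s_{\min}}{2(1+\sigma s_{\min})}$ is admissible, giving $\gamma=1-\lambda$ and $M=\lambda(\frac{s_{\max}}{2}+\frac{1}{\sigma})$; the second inequality of \eqref{eq:lemma-lin-conv-last} then follows from $[f_{i}^{0}]^{*}\geq f_{i}^{*}$ as you say. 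With this single modification your argument closes, and it is in fact a tidy alternative to the paper's combination of \eqref{eq:f-i-original} and \eqref{eq:f-i-to-opt-better}; note that the paper's own proof implicitly respects this same blockwise matching, which is why its constants involve $s_{\min}$ and $s_{\max}$ in the way they do.
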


\begin{proof}
For this proof, $\mathbf{s}$ will stay as $\mathbf{s}^{0}$ throughout,
so we shall just use $\mathbf{s}$. The second inequality of \eqref{eq:lemma-lin-conv-last}
is obvious from $[f_{i}^{0}]^{*}(\cdot)\geq f_{i}^{*}(\cdot)$ for
all $i\in V_{2}$. We prove the first inequality. By \eqref{eq:z-plus-minimizer}
and Assumption \ref{assu:lin-conv-assu}(3), we have, for all $\alpha\in V\cup E$
such that $s_{\alpha}>0$, 
\begin{eqnarray}
 &  & \begin{array}{c}
f_{\alpha}^{*}([\mathbf{z}_{\alpha}^{0}]_{\alpha})+\frac{s_{\alpha}}{2}\|-\hat{z}^{0}+[\mathbf{e}]_{\alpha}\|^{2}\end{array}\label{eq:f-i-original}\\
 & \overset{\eqref{eq:z-plus-minimizer}}{\geq} & \begin{array}{c}
f_{\alpha}^{*}([\mathbf{z}_{\alpha}^{+}]_{\alpha})+\frac{s_{\alpha}}{2}\|-\hat{z}^{0}+[\mathbf{e}]_{\alpha}+\frac{1}{s_{\alpha}}[\mathbf{z}_{\alpha}^{0}-\mathbf{z}_{\alpha}^{+}]_{\alpha}\|^{2}+\frac{\sigma+(1/s_{\alpha})}{2}\|[\mathbf{z}_{\alpha}^{0}-\mathbf{z}_{\alpha}^{+}]\|^{2}.\end{array}\nonumber \\
 & \overset{\scriptsize{\text{Assu \ref{assu:lin-conv-assu}(2)}}}{\geq} & \begin{array}{c}
f_{\alpha}^{*}([\mathbf{z}_{\alpha}^{+}]_{\alpha})+\frac{s_{\alpha}}{2}\|-\hat{z}^{0}+[\mathbf{e}]_{\alpha}+\frac{1}{s_{\alpha}}[\mathbf{z}_{\alpha}^{0}-\mathbf{z}_{\alpha}^{+}]_{\alpha}\|^{2}+\frac{\sigma+(1/s_{\max})}{2}\|[\mathbf{z}_{\alpha}^{0}-\mathbf{z}_{\alpha}^{+}]\|^{2}.\end{array}\nonumber 
\end{eqnarray}
Also, the optimality condition of \eqref{eq:z-plus-minimizer} implies
that $-\hat{z}^{0}+[\mathbf{e}]_{\alpha}+\frac{1}{s_{\alpha}}[\mathbf{z}_{\alpha}^{0}-\mathbf{z}_{\alpha}^{+}]_{\alpha}\in\partial f_{\alpha}([\mathbf{z}_{\alpha}^{+}]_{\alpha})$.
So together with Assumption \ref{assu:lin-conv-assu}(3), for all
$\alpha\in V\cup E$ such that $s_{\alpha}>0$, we have
\begin{eqnarray}
 &  & \begin{array}{c}
f_{\alpha}^{*}([\mathbf{z}_{\alpha}^{*}]_{\alpha})+\frac{s_{\alpha}}{2}\|-\hat{z}^{*}\|^{2}\end{array}\nonumber \\
 & \overset{\scriptsize{\text{Assu \ref{assu:lin-conv-assu}(3)}}}{\geq} & \begin{array}{c}
f_{\alpha}^{*}([\mathbf{z}_{\alpha}^{+}]_{\alpha})+\frac{s_{\alpha}}{2}\|-\hat{z}^{*}\|^{2}\end{array}\label{eq:f-i-to-opt}\\
 &  & \begin{array}{c}
+\langle-\hat{z}^{0}+[\mathbf{e}]_{\alpha}+\frac{1}{s_{\alpha}}[\mathbf{z}_{\alpha}^{0}-\mathbf{z}_{\alpha}^{+}]_{\alpha},[\mathbf{z}_{\alpha}^{*}-\mathbf{z}_{\alpha}^{+}]_{\alpha}\rangle+\frac{\sigma}{2}\|[\mathbf{z}_{\alpha}^{*}-\mathbf{z}_{\alpha}^{+}]_{\alpha}\|^{2}\end{array}\nonumber \\
 & = & \begin{array}{c}
f_{\alpha}^{*}([\mathbf{z}_{\alpha}^{+}]_{\alpha})+\frac{s_{\alpha}}{2}\|-\hat{z}^{0}+[\mathbf{e}]_{\alpha}+\frac{1}{s_{\alpha}}[\mathbf{z}_{\alpha}^{0}-\mathbf{z}_{\alpha}^{+}]_{\alpha}\|^{2}+\frac{\sigma}{2}\|[\mathbf{z}_{\alpha}^{*}-\mathbf{z}_{\alpha}^{+}]_{\alpha}\|^{2}\end{array}\nonumber \\
 &  & \begin{array}{c}
+\frac{s_{\alpha}}{2}\|-\hat{z}^{*}\|^{2}+\langle-\hat{z}^{0}+[\mathbf{e}]_{\alpha}+\frac{1}{s_{\alpha}}[\mathbf{z}_{\alpha}^{0}-\mathbf{z}_{\alpha}^{+}]_{\alpha},[\mathbf{z}_{\alpha}^{*}-\mathbf{z}_{\alpha}^{+}]_{\alpha}\rangle\end{array}\nonumber \\
 &  & \begin{array}{c}
-(\frac{s_{\alpha}}{2}\|\hat{z}^{0}\|^{2}+\frac{1}{2s_{\alpha}}\|[\mathbf{z}_{\alpha}^{0}-\mathbf{z}_{\alpha}^{+}]_{\alpha}\|^{2}+\frac{s_{\alpha}}{2}\|[\mathbf{e}]_{\alpha}\|^{2}\end{array}\nonumber \\
 &  & \begin{array}{c}
\qquad+\langle-\hat{z}^{0},[\mathbf{z}_{\alpha}^{0}-\mathbf{z}_{\alpha}^{+}]_{\alpha}\rangle+\langle s_{\alpha}[\mathbf{e}]_{\alpha},-\hat{z}^{0}\rangle+\langle[\mathbf{e}]_{\alpha},[\mathbf{z}_{\alpha}^{0}-\mathbf{z}_{\alpha}^{+}]_{\alpha}\rangle).\end{array}\nonumber 
\end{eqnarray}
For the terms not involving $[\mathbf{e}]_{\alpha}$ in the last formula
of \eqref{eq:f-i-to-opt}, we have 
\begin{eqnarray}
 &  & \begin{array}{c}
\underset{\alpha\in V\cup E,s_{\alpha}>0}{\sum}\langle\frac{1}{s_{\alpha}}[\mathbf{z}_{\alpha}^{0}-\mathbf{z}_{\alpha}^{+}]_{\alpha},[\mathbf{z}_{\alpha}^{*}-\mathbf{z}_{\alpha}^{+}]_{\alpha}\rangle\end{array}\nonumber \\
 & \geq & \begin{array}{c}
\underset{\alpha\in V\cup E,s_{\alpha}>0}{\overset{\phantom{i\in V}}{\sum}}\left[-\frac{1}{2\epsilon s_{\alpha}}\|[\mathbf{z}_{\alpha}^{0}-\mathbf{z}_{\alpha}^{+}]_{\alpha}\|^{2}-\frac{\epsilon}{2}\|[\mathbf{z}_{\alpha}^{*}-\mathbf{z}_{\alpha}^{+}]_{\alpha}\|^{2}\right],\end{array}\label{eq:term-1}
\end{eqnarray}
and 
\begin{eqnarray}
 &  & \begin{array}{c}
\underset{\alpha\in V\cup E,s_{\alpha}>0}{\sum}[\langle-\hat{z}^{0},[\mathbf{z}_{\alpha}^{*}-\mathbf{z}_{\alpha}^{+}]_{\alpha}\rangle+\langle\hat{z}^{0},[\mathbf{z}_{\alpha}^{0}-\mathbf{z}_{\alpha}^{+}]_{\alpha}\rangle+\frac{s_{\alpha}}{2}\|-\hat{z}^{*}\|^{2}-\frac{s_{\alpha}}{2}\|\hat{z}^{0}\|^{2}]\end{array}\nonumber \\
 & = & \begin{array}{c}
\underset{\alpha\in V\cup E,s_{\alpha}>0}{\overset{\phantom{i\in V}}{\sum}}[\langle\hat{z}^{0},[\mathbf{z}_{\alpha}^{0}-\mathbf{z}_{\alpha}^{*}]_{\alpha}\rangle+\frac{s_{\alpha}}{2}\|-\hat{z}^{*}\|^{2}-\frac{s_{\alpha}}{2}\|\hat{z}^{0}\|^{2}]\end{array}\label{eq:block-1}\\
 & \overset{\eqref{eq:sum-z-to-V}}{=} & \begin{array}{c}
|V|\langle\hat{z}^{0},\hat{z}^{0}-\hat{z}^{*}\rangle+\frac{|V|}{2}\|\hat{z}^{*}\|^{2}-\frac{|V|}{2}\|\hat{z}^{0}\|^{2}=\frac{|V|}{2}\|\hat{z}^{0}-\hat{z}^{*}\|^{2}\geq0.\end{array}\nonumber 
\end{eqnarray}
For the terms involving $[\mathbf{e}]_{\alpha}$ in the last formula
in \eqref{eq:f-i-to-opt}, we have
\begin{eqnarray}
\begin{array}{c}
\underset{\alpha\in V\cup E,s_{\alpha}>0}{\overset{\phantom{i\in V}}{\sum}}\langle[\mathbf{e}]_{\alpha},[\mathbf{z}_{\alpha}^{*}-\mathbf{z}_{\alpha}^{+}]_{\alpha}\rangle\end{array} & \geq & \begin{array}{c}
\underset{\alpha\in V\cup E,s_{\alpha}>0}{\overset{\phantom{i\in V}}{\sum}}\left[-\frac{1}{2\epsilon}\|[\mathbf{e}]_{\alpha}\|^{2}-\frac{\epsilon}{2}\|[\mathbf{z}_{\alpha}^{*}-\mathbf{z}_{\alpha}^{+}]_{\alpha}\|^{2}\right],\end{array}\nonumber \\
\begin{array}{c}
\underset{\alpha\in V\cup E,s_{\alpha}>0}{\overset{\phantom{i\in V}}{\sum}}\langle s_{\alpha}[\mathbf{e}]_{\alpha},-\hat{z}^{0}\rangle\end{array} & \overset{\eqref{eq:s-e-i-zero}}{=} & \begin{array}{c}
0,\end{array}\label{eq:block-2}\\
\begin{array}{c}
\underset{\alpha\in V\cup E,s_{\alpha}>0}{\overset{\phantom{i\in V}}{\sum}}\langle[\mathbf{e}]_{\alpha},[\mathbf{z}_{\alpha}^{0}-\mathbf{z}_{\alpha}^{+}]_{\alpha}\rangle\end{array} & \geq & \begin{array}{c}
\underset{\alpha\in V\cup E,s_{\alpha}>0}{\overset{\phantom{i\in V}}{\sum}}\left[-\frac{\epsilon}{2}\|[\mathbf{e}]_{\alpha}\|^{2}-\frac{1}{2\epsilon}\|[\mathbf{z}_{\alpha}^{0}-\mathbf{z}_{\alpha}^{+}]_{\alpha}\|^{2}\right].\end{array}\nonumber 
\end{eqnarray}
Summing up the right hand sides of \eqref{eq:term-1}, \eqref{eq:block-1}
and \eqref{eq:block-2} and $\sum_{\alpha\in V\cup E}[\frac{\sigma}{2}\|[\mathbf{z}_{\alpha}^{*}-\mathbf{z}_{\alpha}^{+}]_{\alpha}\|^{2}-\frac{s_{\alpha}}{2}\|[\mathbf{e}]_{\alpha}\|^{2}-\frac{1}{2s_{\alpha}}\|[\mathbf{z}_{\alpha}^{0}-\mathbf{z}_{\alpha}^{+}]_{\alpha}\|^{2}]$
and setting $\epsilon=\sigma/2$ gives 
\begin{eqnarray}
 &  & \begin{array}{c}
\underset{\alpha\in V\cup E,s_{\alpha}>0}{\overset{\phantom{i\in V}}{\sum}}\big[(\frac{\sigma}{2}-\epsilon)\|[\mathbf{z}_{\alpha}^{*}-\mathbf{z}_{\alpha}^{+}]_{\alpha}\|^{2}-(\frac{1}{2\epsilon}+\frac{1}{2\epsilon s_{\alpha}}+\frac{1}{2s_{\alpha}})\|[\mathbf{z}_{\alpha}^{0}-\mathbf{z}_{\alpha}^{+}]_{\alpha}\|^{2}\end{array}\nonumber \\
 &  & \begin{array}{c}
\qquad\qquad\qquad-[\frac{1}{2\epsilon}+\frac{s_{\alpha}}{2}+\frac{\epsilon}{2}]\|[\mathbf{e}]_{\alpha}\|^{2}\big]\end{array}\label{eq:block-3}\\
 & = & \begin{array}{c}
\underset{\alpha\in V\cup E,s_{\alpha}>0}{\overset{\phantom{i\in V}}{\sum}}\big[-(\frac{1}{\sigma}+\frac{1}{\sigma s_{\alpha}}+\frac{1}{2s_{\alpha}})\|[\mathbf{z}_{\alpha}^{0}-\mathbf{z}_{\alpha}^{+}]_{\alpha}\|^{2}-[\frac{1}{\sigma}+\frac{s_{\alpha}}{2}+\frac{\sigma}{4}]\|[\mathbf{e}]_{\alpha}\|^{2}\big]\end{array}\nonumber \\
 & \overset{\scriptsize{\text{Assu \ref{assu:lin-conv-assu}(2)}}}{\geq} & \begin{array}{c}
\underset{\alpha\in V\cup E,s_{\alpha}>0}{\overset{\phantom{i\in V}}{\sum}}\big[-(\frac{1}{\sigma}+\frac{1}{\sigma s_{\min}}+\frac{1}{2s_{\min}})\|[\mathbf{z}_{\alpha}^{0}-\mathbf{z}_{\alpha}^{+}]_{\alpha}\|^{2}-[\frac{1}{\sigma}+\frac{s_{\max}}{2}+\frac{\sigma}{4}]\|[\mathbf{e}]_{\alpha}\|^{2}\big].\end{array}\nonumber 
\end{eqnarray}
Note that $\frac{1}{\sigma}+\frac{1}{\sigma s_{\min}}+\frac{1}{2s_{\min}}=\frac{2s_{\min}+2+\sigma}{2\sigma s_{\min}}$.
Summing the formulas in \eqref{eq:f-i-to-opt} to \eqref{eq:block-3},
we have 
\begin{eqnarray}
 &  & \begin{array}{c}
\underset{\alpha\in V\cup E,s_{\alpha}>0}{\overset{\phantom{i\in V}}{\sum}}\left[f_{\alpha}^{*}([\mathbf{z}_{\alpha}^{*}]_{\alpha})+\frac{1}{2}\|-\hat{z}^{*}\|^{2}\right]\end{array}\nonumber \\
 & \overset{\scriptsize{\text{\eqref{eq:f-i-to-opt} to \eqref{eq:block-3}}}}{\geq} & \begin{array}{c}
\underset{\alpha\in V\cup E,s_{\alpha}>0}{\overset{\phantom{i\in V}}{\sum}}\left[f_{\alpha}^{*}([\mathbf{z}_{\alpha}^{+}]_{\alpha})+\frac{1}{2}\|-\hat{z}^{0}+[\mathbf{e}]_{\alpha}+\frac{1}{s_{\alpha}}[\mathbf{z}_{\alpha}^{0}-\mathbf{z}_{\alpha}^{+}]\|^{2}\right]\end{array}\label{eq:f-i-to-opt-better}\\
 &  & \begin{array}{c}
+\underset{\alpha\in V\cup E,s_{\alpha}>0}{\overset{\phantom{i\in V}}{\sum}}\left[-(\frac{2s_{\min}+2+\sigma}{2\sigma s_{\min}})\|[\mathbf{z}_{\alpha}^{0}-\mathbf{z}_{\alpha}^{+}]_{\alpha}\|^{2}-[\frac{1}{\sigma}+\frac{s_{\max}}{2}+\frac{\sigma}{4}]\|[\mathbf{e}]_{\alpha}\|^{2}\right].\end{array}\nonumber 
\end{eqnarray}
(We say a bit more about \eqref{eq:f-i-to-opt-better}. Recall from
\eqref{eq:s-sums-to-V} that $s_{i}>0$ for all $i\in V$, so $f_{i}^{*}([\mathbf{z}_{i}^{*}]_{i})$
would be part of the sum for all $i\in V$. If $\alpha\in E$, then
\eqref{eq:z-zero-throughout} implies that $f_{\alpha}^{*}(\cdot)=\delta_{\{0\}}(\cdot)$,
so if $f_{\alpha}^{*}([\mathbf{z}_{\alpha}]_{\alpha})$ were to be
finite, then it has to be zero. Thus $f_{\alpha}^{*}([\mathbf{z}_{\alpha}]_{\alpha})$
can be dropped from future sums as needed.) We can then sum up \eqref{eq:f-i-original}
multiplied by $\frac{2s_{\min}+2+\sigma}{\sigma s_{\min}(\sigma+(1/s_{\max}))}$
and \eqref{eq:f-i-to-opt-better} to get 
\begin{eqnarray}
 &  & \begin{array}{c}
\left(\frac{2s_{\min}+2+\sigma}{\sigma s_{\min}(\sigma+(1/s_{\max}))}\right)\underset{\alpha\in V\cup E,s_{\alpha}>0}{\overset{\phantom{i\in V}}{\sum}}[f_{\alpha}^{*}([\mathbf{z}_{\alpha}^{0}]_{\alpha})+\frac{1}{2}\|-\hat{z}^{0}+[\mathbf{e}]_{\alpha}\|^{2}]\end{array}\label{eq:F-S-lin-origin}\\
 &  & \begin{array}{c}
+\underset{\alpha\in V\cup E,s_{\alpha}>0}{\overset{\phantom{i\in V}}{\sum}}\left[f_{\alpha}^{*}([\mathbf{z}_{\alpha}^{*}]_{\alpha})+\frac{1}{2}\|-\hat{z}^{*}\|^{2}\right]\end{array}\nonumber \\
 & \geq & \begin{array}{c}
\left(\frac{2s_{\min}+2+\sigma}{\sigma s_{\min}(\sigma+(1/s_{\max}))}+1\right)\underset{\alpha\in V\cup E,s_{\alpha}>0}{\overset{\phantom{i\in V}}{\sum}}\left[f_{\alpha}^{*}([\mathbf{z}_{\alpha}^{+}]_{\alpha})+\frac{1}{2}\|-\hat{z}^{0}+[\mathbf{e}]_{\alpha}+\frac{1}{s_{\alpha}}[\mathbf{z}_{\alpha}^{0}-\mathbf{z}_{\alpha}^{+}]_{\alpha}\|^{2}\right]\end{array}\nonumber \\
 &  & \begin{array}{c}
-[\frac{1}{\sigma}+\frac{s_{\max}}{2}+\frac{\sigma}{4}]\underset{\alpha\in V\cup E,s_{\alpha}>0}{\sum}\|[\mathbf{e}]_{\alpha}\|^{2}.\end{array}\nonumber 
\end{eqnarray}
Letting $\gamma=\left(\frac{2s_{\min}+2+\sigma}{\sigma s_{\min}(\sigma+(1/s_{\max}))}\right)/\left(\frac{2s_{\min}+2+\sigma}{\sigma s_{\min}(\sigma+(1/s_{\max}))}+1\right)$,
we can rearrange \eqref{eq:F-S-lin-origin} to get\textbf{ }the first
inequality in \eqref{eq:lemma-lin-conv-last} with $M=(1-\gamma)[\frac{1}{\sigma}+\frac{s_{\max}}{2}+\frac{\sigma}{4}]$.
This concludes the proof. 
\end{proof}
Another part of the proof is to show a formula relating the decrease
in objective value to the distance between consecutive iterates. In
order to prove the following result, we need to adopt the convention
throughout the rest of the paper, in addition to \eqref{eq:y-equals-s-x-formula},
that 
\begin{equation}
x_{(i,j)}=x_{i}\text{ whenever }s_{(i,j)}=0.\label{eq:convention}
\end{equation}

\begin{lem}
\label{lem:square-dist-dec}We can assume without loss of generality
that Operation $B$ on the edge $(i,j)$ always follows immediately
after Operation $A$ on the node $i$. Then there is a constant $\gamma_{4}>0$
such that for all consecutive iterates, we have 
\begin{equation}
\begin{array}{c}
\tilde{F}_{S}^{k+1}(\mathbf{z}^{k+1},\mathbf{s}^{k+1})\leq\tilde{F}_{S}^{k}(\mathbf{z}^{k},\mathbf{s}^{k})-\frac{\gamma_{4}}{2}\|\mathbf{x}^{k}-\mathbf{x}^{k+1}\|^{2}.\end{array}\label{eq:decrease-consec}
\end{equation}
\end{lem}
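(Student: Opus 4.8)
The plan is to reduce \eqref{eq:decrease-consec} to a per-operation estimate: I will show that each single application of Operation $A$, $B$ or $C$ of Algorithm \ref{alg:Op_ABC} lowers $\tilde F_S$ by at least a fixed positive multiple of the squared displacement of the primal coordinates $x_\alpha$ that it moves. Writing the passage from iteration $k$ to $k+1$ as a finite composition of such operations, the total decrease telescopes into the sum of the per-operation decreases and is therefore at least $\frac{\gamma}{2}\sum_\ell\|\delta_\ell\|^2$, where $\delta_\ell$ is the displacement produced by the $\ell$-th operation and $\gamma$ is the smallest of the per-operation constants. Since $\mathbf x^{k+1}-\mathbf x^{k}=\sum_\ell\delta_\ell$ coordinatewise and the number $N$ of operations per iteration is uniformly bounded, the elementary inequality $\|\sum_\ell\delta_\ell\|^2\le N\sum_\ell\|\delta_\ell\|^2$ (applied coordinatewise) turns this into a lower bound $\frac{\gamma}{2N}\|\mathbf x^{k}-\mathbf x^{k+1}\|^2$, so that $\gamma_4:=\gamma/N$ works. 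The normalization that Operation $B$ on $(i,j)$ follows immediately after Operation $A$ on $i$ is used precisely so that each $A$-$B$ pair can be treated as one atomic transfer with the edge buffer empty before and after it, preventing in-transit mass from contaminating the recorded iterates.

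For Operation $C$ at a node $j\in V_1$, I would read the update as the exact minimization of $\tilde F_S$ in the single block $[\mathbf z_j]_j$, with all other blocks and all of $\mathbf s$ frozen; by Proposition \ref{prop:sparsity}(1) this moves only $x_j$, and by \eqref{eq:x-alpha-form} the block objective equals $f_j^*(\cdot)+\frac{s_j}{2}\|x_j(\cdot)\|^2$ with $x_j$ affine in $[\mathbf z_j]_j$ of slope $-\frac{1}{s_j}$. The quadratic term alone makes this block objective $\frac{1}{s_j}$-strongly convex, so the minimizer inequality yields a decrease of at least $\frac{1}{2s_j}\|\Delta[\mathbf z_j]_j\|^2$; since $\Delta[\mathbf z_j]_j=-s_j\,\Delta x_j$ and $s_j\ge s_{\min}$ by Assumption \ref{assu:lin-conv-assu}(2), this is at least $\frac{s_{\min}}{2}\|\Delta x_j\|^2$. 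For $j\in V_2$ the dual block is pinned by the affine minorant and carries no strong convexity, so there I would instead use that the primal min-max subproblem of line 19 is $s_j$-strongly convex in $x$ and hence descends by at least $\frac{s_j}{2}\|\Delta x_j\|^2$; the majorant construction of lines 21--22, together with $f_j^{k}\le f_j^{k+1}\le f_j$ (equivalently $[f_j^{k+1}]^*\le[f_j^{k}]^*$) exactly as in Lemma \ref{lem:subdiff-decrease}, transfers this primal descent to a decrease of $\tilde F_S$ even after the approximation superscript is incremented.

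For the bundled Operation $A$-$B$ along $(i,j)$, the useful device is to rewrite the quadratic part of $\tilde F_S$ through $y_\alpha=s_\alpha x_\alpha$ (see \eqref{eq:y-equals-s-x-formula}) as $\sum_\alpha\frac{\|y_\alpha\|^2}{2s_\alpha}$, the perspective of $\frac12\|\cdot\|^2$, which is convex and positively homogeneous of degree one, hence subadditive. Operation $A$ merely splits node $i$'s mass $(y_i,s_i)$ into $\outdeg(i)+1$ pieces all of the identical ratio $x_i$, so by degree-one homogeneity the objective is unchanged and neither $x_i$ nor $x_{(i,j)}$ (the latter equal to $x_i$ throughout, by matching ratio and by \eqref{eq:convention}) moves. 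Operation $B$ then merges the in-transit pair into node $j$, replacing two bins of ratios $x_j$ and $x_i$ by a single bin carrying their $s$-weighted average; the exact decrease is the pooled-variance quantity $\frac{s_j s_{(i,j)}}{2(s_j+s_{(i,j)})}\|x_j-x_i\|^2$, and because $x_j^{k+1}-x_j^{k}=\frac{s_{(i,j)}}{s_j+s_{(i,j)}}(x_i-x_j^{k})$ this is a positive multiple of $\|\Delta x_j\|^2$, at least $\frac{s_{\min}^2}{2s_{\max}}\|\Delta x_j\|^2$ once the bounds $s_{\min}\le s_\alpha\le s_{\max}$ are used. Thus $A$-$B$ moves only $x_j$ and contributes the required quadratic descent.

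The step I expect to be the main obstacle is the weight bookkeeping for Operations $A$ and $B$: these change $\mathbf s$, so the two sides of \eqref{eq:decrease-consec} are evaluated at different $\mathbf s$, and one cannot simply differentiate a single fixed strongly convex function as in Operation $C$. Making the decrease genuinely quadratic in $\Delta\mathbf x$ despite the moving weights, and checking that its constant is uniform in $k$, is exactly where the perspective-function subadditivity, the convention \eqref{eq:convention} that tracks $x_{(i,j)}$ through an empty edge, and the uniform bounds $s_{\min},s_{\max}$ of Assumption \ref{assu:lin-conv-assu}(2) have to be combined. Once the three per-operation estimates hold with explicit constants, assembling $\gamma_4=\gamma/N$ and summing the coordinate contributions, as in the first paragraph, is routine.
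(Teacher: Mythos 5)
Your skeleton (per-operation decrease estimates, Operation $C$ handled by the $\frac{1}{s_j}$-strong convexity of the dual block, a pooled-variance identity for merges) is the same skeleton as the paper's proof, which uses the identity \eqref{eq:diff-identity} for exactly this purpose. The genuine gap is in your treatment of Operations $A$ and $B$: you assume the edge buffer is empty before every Operation $A$, equivalently that $x_{(i,j)}=x_i$ ``throughout, by matching ratio,'' and you read the lemma's normalization as making each $A$--$B$ pair an atomic transfer with empty buffers on both sides. That assumes away precisely the unreliable-communication phenomenon the lemma must handle. The normalization says only that \emph{when} a $B$ occurs on $(i,j)$ it occurs immediately after an $A$ on $i$; it does not say every $A$ is followed by a $B$ on every out-edge. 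When a broadcast is not received, mass accumulates in the buffer at stale ratios, so before the next $A$ one generally has $s_{(i,j)}^{k}>0$ and $x_{(i,j)}^{k}\neq x_{i}^{k}$. Consequently: (i) Operation $A$ is not a no-op --- it merges fresh mass at ratio $x_i^k$ into a stale buffer, which moves $x_{(i,j)}$ and yields a decrease that must itself be harvested (the paper's \eqref{eq:first-dec-ineq}, and \eqref{eq:2a-combined-ineq} for out-edges on which no $B$ follows, its Case 2b); (ii) when $B$ does follow, three bins are merged ($i$'s sent mass, the stale buffer, and node $j$), not two, as in \eqref{eq:second-dec-ineq}; (iii) your conclusion that the $A$--$B$ pair ``moves only $x_j$'' is false: $x_{(i,j)}$ moves as well, and by the convention \eqref{eq:convention} every phantom coordinate $x_{(j,j^{+})}$ on an empty out-edge of $j$ moves in lockstep with $x_j$, which is why the paper needs the factor $\outdeg(j)+1$ in \eqref{eq:combined-dec-ineq} and the inequality \eqref{eq:explain-deg}. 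Without (i)--(iii), the norm $\|\mathbf{x}^{k}-\mathbf{x}^{k+1}\|^{2}$ in \eqref{eq:decrease-consec} contains coordinate movements to which your argument attaches no objective decrease at all, so the claimed bound does not follow.

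A second, repairable, error concerns Operation $C$ for $j\in V_2$: the monotonicity $f_j^{k}\le f_j^{k+1}\le f_j$, ``equivalently $[f_j^{k+1}]^*\le[f_j^{k}]^*$,'' that you invoke to transfer the primal descent across the superscript increment is false in general. The new affine minorant $f_j^{k+1}$ is tangent to $\max\{f_j^{k},\tilde f_j^{k}\}$ at $x_j^{k+1}$ with a slope different from that of $f_j^{k}$, so the two affine functions cross; moreover the conjugate of an affine function is an indicator supported at its single slope, so two such conjugates are never pointwise ordered when the slopes differ. The correct mechanism, which is what line 22 of Algorithm \ref{alg:Op_ABC} and Lemma \ref{lem:subdiff-decrease} encode, is: $\max\{f_j^{k},\tilde f_j^{k}\}\ge f_j^{k}$, so the dual block objective built from the max minorizes the one in $\tilde F_S^{k}$, and $f_j^{k+1}$ is constructed so that its dual block objective has the same minimizer and the same minimum value as the one built from the max; the strong-convexity decrease is then inherited by $\tilde F_S^{k+1}$. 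With that fix your $V_2$ estimate goes through; the Operation $A$/$B$ gap is the substantive one.
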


\begin{proof}
We split into three different cases:

\textbf{Case 1: Operation $C$ on node $i$. }

In this case, only $x_{i}$ changes and $\mathbf{s}^{k+1}=\mathbf{s}^{k}$,
so $\|\mathbf{x}^{k}-\mathbf{x}^{k+1}\|^{2}=\|x_{i}^{k}-x_{i}^{k+1}\|^{2}$.
Now, the form of the optimization problem gives 
\begin{eqnarray*}
\begin{array}{c}
\tilde{F}_{S}^{k+1}(\mathbf{z}^{k+1},\mathbf{s}^{k+1})\end{array} & \leq & \begin{array}{c}
\tilde{F}_{S}^{k}(\mathbf{z}^{k},\mathbf{s}^{k})-\frac{s_{i}^{k}}{2}\|\mathbf{x}^{k}-\mathbf{x}^{k+1}\|^{2}\end{array}\\
 & \overset{\scriptsize{\text{Assu \ref{assu:lin-conv-assu}(2)}}}{\leq} & \begin{array}{c}
\tilde{F}_{S}^{k}(\mathbf{z}^{k},\mathbf{s}^{k})-\frac{s_{\min}}{2}\|\mathbf{x}^{k}-\mathbf{x}^{k+1}\|^{2}.\end{array}
\end{eqnarray*}

\textbf{Case 2: Operation $A$ on node $i$. }

We first state an easily checkable identity that would be used often
in this proof: 
\begin{equation}
\begin{array}{c}
s_{a}\|x_{a}\|^{2}+s_{b}\|x_{b}\|^{2}-(s_{a}+s_{b})\|\frac{s_{a}x_{a}+s_{b}x_{b}}{s_{a}+s_{b}}\|^{2}=\frac{s_{a}s_{b}}{s_{a}+s_{b}}\|x_{a}-x_{b}\|^{2}.\end{array}\label{eq:diff-identity}
\end{equation}

There are two further cases.

\textbf{Case 2a: When the data is received by a node $j\in\Nout(i)$.}

We have $x_{j}^{k+1}=\frac{s_{i}^{k+1}x_{i}^{k}+s_{(i,j)}^{k}x_{(i,j)}^{k}+s_{j}^{k}x_{j}^{k}}{s_{i}^{k+1}+s_{(i,j)}^{k}+s_{j}^{k}}$
and $s_{i}^{k+1}=s_{i}^{k}/(\outdeg(i)+1)$. Note also that $s_{j}^{k+1}=s_{i}^{k+1}+s_{(i,j)}^{k}+s_{j}^{k}$.
We first try to show that there is a constant $\gamma_{4}>0$ such
that 
\begin{eqnarray}
 &  & \begin{array}{c}
\frac{s_{i}^{k+1}}{2}\|x_{i}^{k}\|^{2}+\frac{s_{(i,j)}^{k}}{2}\|x_{(i,j)}^{k}\|^{2}+\frac{s_{j}^{k}}{2}\|x_{j}^{k}\|^{2}-\frac{s_{j}^{k+1}}{2}\left\Vert x_{j}^{k+1}\right\Vert ^{2}\end{array}\label{eq:combined-dec-ineq}\\
 & \geq & \begin{array}{c}
\frac{\gamma_{4}}{2}\left(\|x_{(i,j)}^{k+1}-x_{(i,j)}^{k}\|^{2}+(\outdeg(j)+1)\|x_{j}^{k}-x_{j}^{k+1}\|^{2}\right).\end{array}\nonumber 
\end{eqnarray}

Suppose $s_{(i,j)}^{k}>0$. Note that $s_{(i,j)}^{k+1}=0$ in this
case, so $x_{i}^{k}=x_{i}^{k+1}\overset{\eqref{eq:convention}}{=}x_{(i,j)}^{k+1}$.
Then
\begin{eqnarray}
 &  & \begin{array}{c}
\frac{s_{i}^{k+1}}{2}\|x_{i}^{k}\|^{2}+\frac{s_{(i,j)}^{k}}{2}\|x_{(i,j)}^{k}\|^{2}-\frac{s_{i}^{k+1}+s_{(i,j)}^{k}}{2}\left\Vert \frac{s_{i}^{k+1}x_{i}^{k}+s_{(i,j)}^{k}x_{(i,j)}^{k}}{s_{i}^{k+1}+s_{(i,j)}^{k}}\right\Vert ^{2}\end{array}\label{eq:first-dec-ineq}\\
 & \overset{\eqref{eq:diff-identity}}{=} & \begin{array}{c}
\frac{s_{i}^{k+1}s_{(i,j)}^{k}}{2(s_{i}^{k+1}+s_{(i,j)}^{k})}\|x_{i}^{k}-x_{(i,j)}^{k}\|^{2}\overset{\eqref{eq:convention}}{=}\frac{s_{i}^{k+1}s_{(i,j)}^{k}}{2(s_{i}^{k+1}+s_{(i,j)}^{k})}\|x_{(i,j)}^{k+1}-x_{(i,j)}^{k}\|^{2}\end{array}\nonumber \\
 & \overset{\scriptsize{\text{Assu \ref{assu:lin-conv-assu}(2)}}}{\geq} & \begin{array}{c}
\frac{s_{\min}}{4}\|x_{(i,j)}^{k+1}-x_{(i,j)}^{k}\|^{2}.\end{array}\nonumber 
\end{eqnarray}
 Note that if $s_{(i,j)}^{k}=0$, then $x_{(i,j)}^{k+1}\overset{\eqref{eq:convention}}{=}x_{i}^{k+1}=x_{i}^{k}\overset{\eqref{eq:convention}}{=}x_{(i,j)}^{k}$,
so one can check that the left and right hand sides of \eqref{eq:first-dec-ineq}
are both zero, so \eqref{eq:first-dec-ineq} would automatically be
satisfied. 

Next, note that $x_{j}^{k+1}=\frac{1}{s_{i}^{k+1}+s_{(i,j)}^{k}+s_{j}^{k}}[(s_{i}^{k+1}+s_{(i,j)}^{k})\frac{s_{i}^{k+1}x_{i}^{k}+s_{(i,j)}^{k}x_{(i,j)}^{k}}{s_{i}^{k+1}+s_{(i,j)}^{k}}+s_{j}^{k}x_{j}^{k}]$,
which gives 
\begin{equation}
\begin{array}{c}
x_{j}^{k+1}-x_{j}^{k}=\frac{s_{i}^{k+1}+s_{(i,j)}^{k}}{s_{i}^{k+1}+s_{(i,j)}^{k}+s_{j}^{k}}\left[\frac{s_{i}^{k+1}x_{i}^{k}+s_{(i,j)}^{k}x_{(i,j)}^{k}}{s_{i}^{k+1}+s_{(i,j)}^{k}}-x_{j}^{k}\right].\end{array}\label{eq:prep-second-dec}
\end{equation}
Recall $s_{j}^{k+1}=s_{i}^{k+1}+s_{(i,j)}^{k}+s_{j}^{k}$. We have
\begin{align}
 & \begin{array}{c}
\frac{s_{i}^{k+1}+s_{(i,j)}^{k}}{2}\left\Vert \frac{s_{i}^{k+1}x_{i}^{k}+s_{(i,j)}^{k}x_{(i,j)}^{k}}{s_{i}^{k+1}+s_{(i,j)}^{k}}\right\Vert ^{2}+\frac{s_{j}^{k}}{2}\|x_{j}^{k}\|^{2}-\frac{s_{j}^{k+1}}{2}\left\Vert x_{j}^{k+1}\right\Vert ^{2}\end{array}\label{eq:second-dec-ineq}\\
\overset{\eqref{eq:diff-identity}}{=} & \begin{array}{c}
\frac{(s_{i}^{k+1}+s_{(i,j)}^{k})s_{j}^{k}}{2(s_{i}^{k+1}+s_{(i,j)}^{k}+s_{j}^{k})}\left\Vert \frac{s_{i}^{k+1}x_{i}^{k}+s_{(i,j)}^{k}x_{(i,j)}^{k}}{s_{i}^{k+1}+s_{(i,j)}^{k}}-x_{j}^{k}\right\Vert ^{2}\end{array}\nonumber \\
\overset{\eqref{eq:prep-second-dec}}{=} & \begin{array}{c}
\frac{(s_{i}^{k+1}+s_{(i,j)}^{k}+s_{j}^{k})s_{j}^{k}}{2(s_{i}^{k+1}+s_{(i,j)}^{k})}\left\Vert x_{j}^{k+1}-x_{j}^{k}\right\Vert ^{2}\overset{\scriptsize{\text{Assu \ref{assu:lin-conv-assu}(2)}}}{\geq}\frac{s_{\min}}{2}\|x_{j}^{k+1}-x_{j}^{k}\|^{2}.\end{array}\nonumber 
\end{align}
Summing up \eqref{eq:first-dec-ineq} and \eqref{eq:second-dec-ineq}
easily leads to a choice of $\gamma_{4}>0$ so that \eqref{eq:combined-dec-ineq}
holds. 

\textbf{Case 2b: When the data is not received by a node $j\in\Nout(i)$.}

In this case, note that $x_{j}^{k+1}=x_{j}^{k}$, and $x_{(i,j)}^{k+1}=\frac{s_{i}^{k+1}x_{i}^{k}+s_{(i,j)}^{k}x_{(i,j)}^{k}}{s_{i}^{k+1}+s_{(i,j)}^{k}}$.
Just like in \eqref{eq:prep-second-dec}, in the case when $s_{(i,j)}^{k}>0$,
we have $x_{(i,j)}^{k+1}-x_{(i,j)}^{k}=\frac{s_{i}^{k+1}}{s_{i}^{k+1}+s_{(i,j)}^{k}}(x_{i}^{k}-x_{(i,j)}^{k})$,
and so we can reduce $\gamma_{4}$ in \eqref{eq:combined-dec-ineq}
if necessary so that 
\begin{eqnarray}
 &  & \begin{array}{c}
\frac{s_{i}^{k+1}}{2}\|x_{i}^{k}\|^{2}+\frac{s_{(i,j)}^{k}}{2}\|x_{(i,j)}^{k}\|^{2}-\frac{s_{i}^{k+1}+s_{(i,j)}^{k}}{2}\left\Vert \frac{s_{i}^{k+1}x_{i}^{k}+s_{(i,j)}^{k}x_{(i,j)}^{k}}{s_{i}^{k+1}+s_{(i,j)}^{k}}\right\Vert ^{2}\end{array}\nonumber \\
 & \overset{\eqref{eq:diff-identity}}{=} & \begin{array}{c}
\frac{s_{i}^{k+1}s_{(i,j)}^{k}}{2(s_{i}^{k+1}+s_{(i,j)}^{k})}\|x_{i}^{k}-x_{(i,j)}^{k}\|^{2}=\frac{s_{(i,j)}^{k}(s_{i}^{k+1}+s_{(i,j)}^{k})}{2s_{i}^{k+1}}\|x_{(i,j)}^{k+1}-x_{(i,j)}^{k}\|^{2}\end{array}\nonumber \\
 & \overset{\scriptsize{\text{Assu \ref{assu:lin-conv-assu}(2)}}}{\geq} & \begin{array}{c}
\frac{s_{\min}}{2}\|x_{(i,j)}^{k+1}-x_{(i,j)}^{k}\|^{2}\geq\frac{\gamma_{4}}{2}\|x_{(i,j)}^{k+1}-x_{(i,j)}^{k}\|^{2}.\end{array}\label{eq:2a-combined-ineq}
\end{eqnarray}
If $s_{(i,j)}^{k}=0$, then we see that both sides of \eqref{eq:2a-combined-ineq}
are zero, so the choice of $\gamma_{4}$ is irrelevant for \eqref{eq:2a-combined-ineq}
to hold. $\hfill\triangle$

After establishing the inequalities \eqref{eq:combined-dec-ineq}
and \eqref{eq:2a-combined-ineq} for the cases 2a and 2b, we now prove
that \eqref{eq:decrease-consec} holds. If $j$ receives data from
$i$, then by recalling the convention in \eqref{eq:convention},
we have
\begin{equation}
\begin{array}{c}
(\outdeg(j)+1)\|x_{j}^{k}-x_{j}^{k+1}\|^{2}\overset{\eqref{eq:convention}}{\geq}\|x_{j}^{k}-x_{j}^{k+1}\|^{2}+\underset{{\scriptsize{j^{+}\in\Nout(j)}\atop \scriptsize{s_{(j,j^{+})}^{k}=0}}}{\sum}\|x_{(j,j^{+})}^{k}-x_{(j,j^{+})}^{k+1}\|^{2}.\end{array}\label{eq:explain-deg}
\end{equation}
Note that 
\begin{eqnarray}
 &  & \begin{array}{c}
\tilde{F}_{S}^{k}(\mathbf{z}^{k},\mathbf{s}^{k})-\tilde{F}_{S}^{k+1}(\mathbf{z}^{k+1},\mathbf{s}^{k+1})\end{array}\label{eq:diff-duals}\\
 & = & \begin{array}{c}
\underset{{j\in\scriptsize{\Nout(i)}\atop \scriptsize{j\text{ receives data}}}}{\overset{\phantom{{j\in\scriptsize{\Nout(i)}\atop \scriptsize{j\text{ receives data}}}}}{\sum}}\left[\frac{s_{i}^{k+1}}{2}\|x_{i}^{k}\|^{2}+\frac{s_{(i,j)}^{k}}{2}\|x_{(i,j)}^{k}\|^{2}+\frac{s_{j}^{k}}{2}\|x_{j}^{k}\|^{2}-\frac{s_{j}^{k+1}}{2}\left\Vert x_{j}^{k+1}\right\Vert ^{2}\right]\end{array}\nonumber \\
 &  & \begin{array}{c}
+\underset{{j\in\scriptsize{\Nout(i)}\atop \scriptsize{j\text{ does not receive data}}}}{\sum}\left[\frac{s_{i}^{k+1}}{2}\|x_{i}^{k}\|^{2}+\frac{s_{(i,j)}^{k}}{2}\|x_{(i,j)}^{k}\|^{2}-\frac{s_{i}^{k+1}+s_{(i,j)}^{k}}{2}\left\Vert x_{(i,j)}^{k+1}\right\Vert ^{2}\right]\end{array}\nonumber \\
 & \overset{\eqref{eq:combined-dec-ineq},\eqref{eq:2a-combined-ineq}}{\geq} & \begin{array}{c}
\underset{{j\in\scriptsize{\Nout(i)}\atop \scriptsize{j\text{ receives data}}}}{\overset{\phantom{{j\in\scriptsize{\Nout(i)}\atop \scriptsize{j\text{ receives data}}}}}{\sum}}\frac{\gamma_{4}}{2}\left[\|x_{(i,j)}^{k+1}-x_{(i,j)}^{k}\|^{2}+(\outdeg+1)\|x_{j}^{k+1}-x_{j}^{k}\|^{2}\right]\end{array}\nonumber \\
 &  & \begin{array}{c}
+\underset{{j\in\scriptsize{\Nout(i)}\atop \scriptsize{j\text{ does not receive data}}}}{\sum}\frac{\gamma_{4}}{2}\left[\|x_{(i,j)}^{k+1}-x_{(i,j)}^{k}\|^{2}\right]\end{array}\nonumber 
\end{eqnarray}
Combining the inequalities \eqref{eq:diff-duals}, \eqref{eq:explain-deg}
gives us \eqref{eq:decrease-consec} as required. 
\end{proof}
We conclude with the theorem on the linear convergence of Algorithm
\ref{alg:dir-alg}.
\begin{thm}
\label{thm:linear-rate}Suppose Assumption \ref{assu:lin-conv-assu}
is satisfied, and $(\mathbf{z}^{1},\mathbf{x}^{1},\mathbf{s}^{1})$
is obtained from $(\mathbf{z}^{0},\mathbf{x}^{0},\mathbf{s}^{0})$
as described in Subsection \ref{subsec:proof-outline}. Suppose there
is a constant $K>0$ such that for any edge $(i,j)\in E$, there is
some $\bar{k}\in\{1,\dots,K-1\}$ such that in order to obtain $(\mathbf{z}^{\bar{k}+1},\mathbf{x}^{\bar{k}+1},\mathbf{s}^{\bar{k}+1})$
from $(\mathbf{z}^{\bar{k}},\mathbf{x}^{\bar{k}},\mathbf{s}^{\bar{k}})$,
Operation $A$ is conducted on node $i$ followed by Operation $B$
on edge $(i,j)$. Then there is some $\gamma_{5}\in(0,1)$ such that
\[
\tilde{F}_{S}^{K}(\mathbf{z}^{K},\mathbf{s}^{K})-F_{S}^{*}\leq\gamma_{5}[\tilde{F}_{S}^{0}(\mathbf{z}^{0},\mathbf{s}^{0})-F_{S}^{*}].
\]
\end{thm}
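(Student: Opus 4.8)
The plan is to reduce the statement to a single-round contraction and then read off the constant. Write $\Delta_k := \tilde{F}_S^k(\mathbf{z}^k,\mathbf{s}^k) - F_S^*$ for the dual optimality gap, so the goal is $\Delta_K \le \gamma_5\Delta_0$ with $\gamma_5\in(0,1)$. The three preceding lemmas supply the three ingredients: Lemma \ref{lem:gamma-ineq} contracts the \emph{proximal} step up to an error term $M\sum_{\alpha}\|[\mathbf{e}]_\alpha\|^2$ measuring the primal disagreement encoded by $\mathbf{e}$; Lemma \ref{lem:subdiff-decrease} lets me pass from the proximal iterate $\mathbf{z}^+$ to the affine-minorant iterate $\mathbf{z}^1$; and Lemma \ref{lem:square-dist-dec} both makes $\tilde{F}_S^k$ nonincreasing and converts cumulative primal motion into dual decrease. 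The whole difficulty is concentrated in showing that the disagreement term can be absorbed into the telescoped decrease $\Delta_0-\Delta_K$.

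First I would establish a one-round bound $\Delta_1\le\hat\gamma\,\Delta_0+\hat{M}\sum_{\alpha:s_\alpha^0>0}\|[\mathbf{e}]_\alpha\|^2$ for some $\hat\gamma\in(0,1)$, $\hat{M}>0$. Running Operation $C$ on every node with the functions treated as proximal produces $\mathbf{z}^+$, and the second inequality of \eqref{eq:lemma-lin-conv-last} gives $F_S(\mathbf{z}^+,\mathbf{s}^0)-F_S^*\le\gamma[\tilde{F}_S^0(\mathbf{z}^0,\mathbf{s}^0)-F_S^*]+M\sum_{\alpha:s_\alpha^0>0}\|[\mathbf{e}]_\alpha\|^2$. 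Since every $f_i$ is smooth with $\nabla f_i$ Lipschitz (Assumption \ref{assu:lin-conv-assu}(3)), Lemma \ref{lem:subdiff-decrease} applies at each $i\in V_2$, and \eqref{eq:lin-dec-subpblm} forces the ratio of the post-step to pre-step local gap strictly below $1$; summing these per-node estimates controls the discrepancy between $\tilde{F}_S^1$, which uses the minorants $[f_i^1]^*$, and the exact value $F_S(\mathbf{z}^+,\mathbf{s}^0)$, so the proximal contraction transfers to $\Delta_1$ with a possibly enlarged but still subunit factor $\hat\gamma$. Because \eqref{eq:decrease-consec} makes $\tilde{F}_S^k$ nonincreasing, I also get $\Delta_K\le\Delta_1$, so it suffices to bound $\Delta_1$.

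Next I would absorb the disagreement term. At iteration $0$ one has $s_{(i,j)}^0=0$, so by the convention \eqref{eq:convention} the sum reduces to $\sum_{i\in V}\|[\mathbf{e}]_i\|^2$, the variance of the node values $x_i^0$ about their $\mathbf{s}^0$-weighted mean (recall \eqref{eq:s-e-i-zero}). A graph Poincar\'e inequality for the strongly connected underlying graph bounds this variance by $c_G\sum_{(i,j)\in E}\|x_i^0-x_j^0\|^2$. The hypothesis that each edge $(i,j)$ undergoes Operation $A$ on $i$ followed by Operation $B$ on $j$ at some $\bar{k}\in\{1,\dots,K-1\}$, together with the averaging identity \eqref{eq:diff-identity} used in Lemma \ref{lem:square-dist-dec}, bounds each edge difference \emph{at the time of processing} below by a constant multiple of the corresponding motion $\|\mathbf{x}^{\bar{k}}-\mathbf{x}^{\bar{k}+1}\|^2$, while a triangle-inequality drift estimate converts the time-$0$ differences into processing-time differences at the cost of further consecutive-motion terms. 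Hence $\sum_{i\in V}\|[\mathbf{e}]_i\|^2\le C_2\sum_{k=0}^{K-1}\|\mathbf{x}^k-\mathbf{x}^{k+1}\|^2$, and \eqref{eq:decrease-consec} gives $\sum_{k=0}^{K-1}\|\mathbf{x}^k-\mathbf{x}^{k+1}\|^2\le\frac{2}{\gamma_4}(\tilde{F}_S^0-\tilde{F}_S^K)=\frac{2}{\gamma_4}(\Delta_0-\Delta_K)$.

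Finally, combining the last two displays, $\Delta_K\le\Delta_1\le\hat\gamma\Delta_0+\beta(\Delta_0-\Delta_K)$ with $\beta:=2\hat{M}C_2/\gamma_4$, and solving for $\Delta_K$ yields $\Delta_K\le\frac{\hat\gamma+\beta}{1+\beta}\,\Delta_0$, so that $\gamma_5:=\frac{\hat\gamma+\beta}{1+\beta}$ lies in $(0,1)$ precisely because $\hat\gamma<1$. The main obstacle is the consensus estimate of the third paragraph: reducing the time-$0$ disagreement to motions that actually occur during the round, since the edge differences evolve between iteration $0$ and the iteration at which each edge is processed, so the drift must be charged carefully against the same consecutive-motion budget already controlled by Lemma \ref{lem:square-dist-dec}.
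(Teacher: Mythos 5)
Your overall architecture matches the paper's: Lemma \ref{lem:gamma-ineq} plus Lemma \ref{lem:subdiff-decrease} give a one-round contraction up to the error term $M\sum_{\alpha}\|[\mathbf{e}]_{\alpha}\|^{2}$, Lemma \ref{lem:square-dist-dec} converts primal motion into dual decrease, and connectivity is used to charge the time-$0$ disagreement against the motion occurring in the window of length $K$. Your closing ``absorb and solve'' step, $\Delta_{K}\leq\hat{\gamma}\Delta_{0}+\beta(\Delta_{0}-\Delta_{K})$ hence $\Delta_{K}\leq\frac{\hat{\gamma}+\beta}{1+\beta}\Delta_{0}$, is a clean algebraic equivalent of the paper's two-case split (error term small versus large relative to the gap) and is correct.

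The genuine gap is in your third paragraph, in two places. First, you discard the edge components of $\mathbf{e}$ by asserting $s_{(i,j)}^{0}=0$. That holds only at the literal initialization of Algorithm \ref{alg:dir-alg}; the setup of Subsection \ref{subsec:proof-outline} invokes time invariance precisely so that iteration $0$ can be any mid-run state, where edge buffers generally carry positive mass $s_{(i,j)}^{0}>0$ and stale values $x_{(i,j)}^{0}$ (and the theorem must apply to such states if it is to be iterated over consecutive windows to yield linear convergence). The error term then contains $\|[\mathbf{e}]_{(i,j)}\|^{2}$ for those edges, which a node-only Poincar\'e inequality never sees; one needs regularity over the structure $F$, which pairs each buffer $(i,j)$ with its tail node $i$ as well as pairing $i$ with $j$ --- exactly why the paper's Case 2 distinguishes $\beta^{*}=\{i,(i,j)\}$ from $\beta^{*}=\{i,j\}$. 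Second, your key claim that the processing-time disagreement $\|x_{i}^{\bar{k}}-x_{j}^{\bar{k}}\|$ is bounded by a constant times the processing-step motion is asserted (``by the averaging identity'') but not proved, and \eqref{eq:diff-identity} alone does not give it: in the actual step the mass reaching $j$ is the mixture $\big(s_{i}^{\bar{k}+1}x_{i}^{\bar{k}}+s_{(i,j)}^{\bar{k}}x_{(i,j)}^{\bar{k}}\big)/\big(s_{i}^{\bar{k}+1}+s_{(i,j)}^{\bar{k}}\big)$, and a stale buffer value can cancel the contribution of $x_{i}^{\bar{k}}$ so that $x_{j}$ does not move at all even when $\|x_{i}^{\bar{k}}-x_{j}^{\bar{k}}\|$ is large. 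To close this you must additionally charge the buffer-flush motion $\|x_{(i,j)}^{\bar{k}+1}-x_{(i,j)}^{\bar{k}}\|=\|x_{i}^{\bar{k}}-x_{(i,j)}^{\bar{k}}\|$ (using convention \eqref{eq:convention}) through a triangle inequality, or do what the paper does: construct the half-step iterate in \eqref{eq:k-and-half-step} that transfers mass from $i$ directly to $j$, show the true step decreases $\tilde{F}_{S}$ at least as much as this half-step, and bound $d(\mathbf{x}^{\bar{k}},H_{\beta^{*}})$ by $\|\mathbf{x}^{\bar{k}}-\mathbf{x}^{\bar{k}+0.5}\|$, which is the content of the Claim \eqref{eq:dec-fn-val}--\eqref{eq:distance-bdd}. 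This buffer issue is the crux of the directed/unreliable setting, and your proposal, which locates the main obstacle only in the drift estimate, leaves it unaddressed.
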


\begin{proof}
Let $\mathbf{z}^{+}\in[[\mathbb{R}^{m}]^{|V\cup E|}]^{|V\cup F|}$
satisfy \eqref{eq:z-plus-minimizer}, where $\hat{z}^{0}\overset{\eqref{eq:z-hat-zero}}{=}\frac{1}{|V|}\sum_{\alpha\in V\cup E}[\mathbf{z}_{\alpha}^{0}]_{\alpha}-\bar{m}$
like in Lemma \ref{lem:gamma-ineq}. Lemma \ref{lem:gamma-ineq} shows
that there is a $\gamma\in(0,1)$ such that 
\[
\begin{array}{c}
F_{S}(\mathbf{z}^{+},\mathbf{s}^{0})-F_{S}(\mathbf{z}^{*},\mathbf{s}^{0})\overset{\eqref{eq:lemma-lin-conv-last}}{\leq}\gamma[\tilde{F}_{S}^{0}(\mathbf{z}^{0},\mathbf{s}^{0})-F_{S}(\mathbf{z}^{*},\mathbf{s}^{0})]+M\underset{\alpha\in V\cup E,s_{\alpha}^{0}>0}{\sum}\|\mathbf{e}_{\alpha}\|^{2}.\end{array}
\]
Let $p_{\alpha}=-\hat{z}^{0}+[\mathbf{e}]_{\alpha}+\frac{1}{s_{\alpha}^{0}}[\mathbf{z}_{\alpha}^{0}]_{\alpha}$,
which is the proximal center in the formula \eqref{eq:z-plus-minimizer}.
Lemma \ref{lem:subdiff-decrease} implies that there is a constant
$\gamma_{2}\in[0,1)$ such that, for all $\alpha\in V_{2}$, 
\begin{eqnarray}
 &  & \begin{array}{c}
\big[[f_{\alpha}^{1}]^{*}([\mathbf{z}_{\alpha}^{1}]_{\alpha})+\frac{s_{\alpha}^{0}}{2}\|p_{\alpha}-\frac{1}{s_{\alpha}^{0}}[\mathbf{z}_{\alpha}^{1}]_{\alpha}\|^{2}\big]-[f_{\alpha}^{*}([\mathbf{z}_{\alpha}^{+}]_{\alpha})+\frac{s_{\alpha}^{0}}{2}\|p_{\alpha}-\frac{1}{s_{\alpha}^{0}}[\mathbf{z}_{\alpha}^{+}]_{\alpha}\|^{2}]\end{array}\label{eq:lin-conv-component}\\
 & \leq & \begin{array}{c}
\gamma_{2}\Big[\big[[f_{\alpha}^{0}]^{*}([\mathbf{z}_{\alpha}^{0}]_{\alpha})+\frac{s_{\alpha}^{0}}{2}\|p_{\alpha}-\frac{1}{s_{\alpha}^{0}}[\mathbf{z}_{\alpha}^{0}]_{\alpha}\|^{2}\big]-[f_{\alpha}^{*}([\mathbf{z}_{\alpha}^{+}]_{\alpha})+\frac{s_{\alpha}^{0}}{2}\|p_{\alpha}-\frac{1}{s_{\alpha}^{0}}[\mathbf{z}_{\alpha}^{+}]_{\alpha}\|^{2}]\Big].\end{array}\nonumber 
\end{eqnarray}
Summing \eqref{eq:lin-conv-component} over all $\alpha\in V\cup E$
gives 
\begin{equation}
\tilde{F}_{S}^{1}(\mathbf{z}^{1},\mathbf{s}^{0})-F_{S}(\mathbf{z}^{+},\mathbf{s}^{0})\overset{\eqref{eq:F-S-tilde}}{\leq}\gamma_{2}[\tilde{F}_{S}^{0}(\mathbf{z}^{0},\mathbf{s}^{0})-F_{S}(\mathbf{z}^{+},\mathbf{s}^{0})].\label{eq:gamma-2-ineq}
\end{equation}
Note that $\mathbf{s}^{1}=\mathbf{s}^{0}$. We have 
\begin{eqnarray}
 &  & \tilde{F}_{S}^{1}(\mathbf{z}^{1},\mathbf{s}^{1})-F_{S}(\mathbf{z}^{*},\mathbf{s}^{0})\label{eq:chunk-ineq}\\
 & = & \tilde{F}_{S}^{1}(\mathbf{z}^{1},\mathbf{s}^{1})-F_{S}(\mathbf{z}^{+},\mathbf{s}^{0})+F_{S}(\mathbf{z}^{+},\mathbf{s}^{0})-F_{S}(\mathbf{z}^{*},\mathbf{s}^{0})\nonumber \\
 & \overset{\eqref{eq:gamma-2-ineq}}{\leq} & \gamma_{2}[\tilde{F}_{S}^{0}(\mathbf{z}^{0},\mathbf{s}^{0})-F_{S}(\mathbf{z}^{+},\mathbf{s}^{0})]+F_{S}(\mathbf{z}^{+},\mathbf{s}^{0})-F_{S}(\mathbf{z}^{*},\mathbf{s}^{0})\nonumber \\
 & = & \gamma_{2}[\tilde{F}_{S}^{0}(\mathbf{z}^{0},\mathbf{s}^{0})-F_{S}(\mathbf{z}^{*},\mathbf{s}^{0})]+(1-\gamma_{2})[F_{S}(\mathbf{z}^{+},\mathbf{s}^{0})-F_{S}(\mathbf{z}^{*},\mathbf{s}^{0})]\nonumber \\
 & \overset{\scriptsize{\mbox{Lem }\ref{lem:gamma-ineq}}}{\leq} & \underbrace{[1-(1-\gamma)(1-\gamma_{2})]}_{\gamma_{3}}[\tilde{F}_{S}^{0}(\mathbf{z}^{0},\mathbf{s}^{0})-F_{S}^{*}]+(1-\gamma_{2})M\sum_{\alpha\in V\cup E,s_{\alpha}^{0}>0}\|\mathbf{e}_{\alpha}\|^{2}.\nonumber 
\end{eqnarray}
Since $\gamma<1$ and $\gamma_{2}<1$, the $\gamma_{3}$ as marked
above satisfies $\gamma_{3}\in[0,1)$. We now consider 2 cases. 

\textbf{Case 1: $(1-\gamma_{2})M\sum_{\alpha\in V\cup E,s_{\alpha}^{0}>0}\|\mathbf{e}_{\alpha}\|^{2}\leq\frac{1-\gamma_{3}}{2}[\tilde{F}_{S}^{0}(\mathbf{z}^{0},\mathbf{s}^{0})-F_{S}^{*}]$. }

We make use of the fact that the objective value is nonincreasing
to get 
\begin{equation}
\begin{array}{c}
\tilde{F}_{S}^{K}(\mathbf{z}^{K},\mathbf{s}^{K})-F_{S}^{*}\leq\tilde{F}_{S}^{1}(\mathbf{z}^{1},\mathbf{s}^{1})-F_{S}^{*}\overset{\scriptsize{\text{\eqref{eq:chunk-ineq}, Case 1}}}{\underset{\phantom{\scriptsize{\text{\eqref{eq:chunk-ineq}, Case 1}}}}{\leq}}\frac{1+\gamma_{3}}{2}[\tilde{F}_{S}^{0}(\mathbf{z}^{0},\mathbf{s}^{0})-F_{S}^{*}].\end{array}\label{eq:fin-concl-1}
\end{equation}

\textbf{Case 2: $(1-\gamma_{2})M\sum_{\alpha\in V\cup E,s_{\alpha}^{0}>0}\|\mathbf{e}_{\alpha}\|^{2}\geq\frac{1-\gamma_{3}}{2}[\tilde{F}_{S}^{0}(\mathbf{z}^{0},\mathbf{s}^{0})-F_{S}^{*}]$. }

We recall that $x_{\alpha}^{k}\overset{\eqref{eq:x-alpha-form}}{=}\bar{m}-\frac{1}{s_{\alpha}}\sum_{\alpha_{2}\in V\cup F}[\mathbf{z}_{\alpha_{2}}^{k}]_{\alpha}$.
It is elementary to check from \eqref{eq:z-hat-zero} that $(-\hat{z}^{0},\dots,-\hat{z}^{0})$
is the projection of $\mathbf{x}^{0}$ onto the diagonal set $D$
in the $\|\cdot\|_{s}$ seminorm, i.e.,
\begin{equation}
\begin{array}{c}
-\hat{z}^{0}\overset{\eqref{eq:z-hat-zero}}{=}\underset{x}{\arg\min}\underset{\alpha\in V\cup E}{\overset{\phantom{\alpha\in V\cup E}}{\sum}}s_{\alpha}^{0}\|x-x_{\alpha}^{0}\|^{2}.\end{array}\label{eq:z-hat-is-min}
\end{equation}
We have 
\begin{eqnarray}
 &  & \begin{array}{c}
\underset{\alpha\in V\cup E}{\overset{\phantom{\alpha\in V\cup E}}{\sum}}s_{\alpha}^{0}\|e_{\alpha}\|^{2}\end{array}\overset{\eqref{eq:def-e}}{=}\begin{array}{c}
\underset{\alpha\in V\cup E}{\overset{\phantom{\alpha\in V\cup E}}{\sum}}s_{\alpha}^{0}\|x_{\alpha}^{0}-(-\hat{z}^{0})\|^{2}\end{array}\nonumber \\
 & \overset{\eqref{eq:z-hat-is-min}}{=} & \begin{array}{c}
\underset{x}{\min}\underset{\alpha\in V\cup E}{\overset{\phantom{\alpha\in V\cup E}}{\sum}}s_{\alpha}^{0}\|x_{\alpha}^{0}-x\|^{2}\end{array}\overset{\scriptsize{\text{Assu \eqref{assu:lin-conv-assu}(2)}}}{\leq}\begin{array}{c}
\underset{x}{\min}\underset{\alpha\in V\cup E}{\overset{\phantom{\alpha\in V\cup E}}{\sum}}s_{\max}\|x_{\alpha}^{0}-x\|^{2}\end{array}\nonumber \\
 & = & \begin{array}{c}
s_{\max}d(\mathbf{x}_{\phantom{0}}^{0},D)_{\phantom{2}}^{2}.\end{array}\label{eq:sum-e-to-dist}
\end{eqnarray}
By linear regularity arguments typically used in the method of alternating
projections applied to \eqref{eq:set-H-formula} and \eqref{eq:diag-set},
there is a constant $\kappa>0$ such that $d(\mathbf{x},D)\leq\kappa\max_{\beta\in F}d(\mathbf{x},H_{\beta})$.
Let $\beta^{*}\in F$ be such that $d(\mathbf{x}^{0},D)\leq\kappa d(\mathbf{x}^{0},H_{\beta^{*}})$.
To summarize, 
\begin{eqnarray}
 &  & \begin{array}{c}
\frac{1-\gamma_{3}}{2M(1-\gamma_{2})}[\tilde{F}_{S}^{0}(\mathbf{z}^{0},\mathbf{s}^{0})-F_{S}(\mathbf{z}^{*},\mathbf{s}^{0})]\overset{\scriptsize{\mbox{Case 2}}}{\leq}\underset{\alpha\in V\cup E,s_{\alpha}>0}{\overset{\phantom{i\in V}}{\sum}}\|[\mathbf{e}]_{\alpha}\|^{2}\end{array}\label{eq:lin-reg-arg}\\
 & \overset{\scriptsize{\text{Assu \eqref{assu:lin-conv-assu}(2)}}}{\leq} & \begin{array}{c}
\underset{\alpha\in V\cup E}{\overset{\phantom{i\in V}}{\sum}}\frac{s_{\alpha}^{0}}{s_{\min}}\|[\mathbf{e}]_{\alpha}\|^{2}\overset{\eqref{eq:sum-e-to-dist}}{\leq}\frac{s_{\max}}{s_{\min}}d(\mathbf{x}^{0},D)^{2}\leq\frac{s_{\max}\kappa^{2}}{s_{\min}}d(\mathbf{x}^{0},H_{\beta^{*}})^{2}.\end{array}\nonumber 
\end{eqnarray}

The hyperplane $H_{\beta^{*}}$ can either be of the two cases, $\{i,(i,j)\}$
or $\{i,j\}$, where $(i,j)\in E$. In both cases, suppose in iteration
$\bar{k}$, Operations $A$ is performed on node $i$ transmits and
node $j$ receives the data in iteration, and that $\bar{k}+1\leq K$. 

\textbf{Claim:} There is some $(\mathbf{z}^{\bar{k}+0.5},\mathbf{x}^{\bar{k}+0.5},\mathbf{s}^{\bar{k}+0.5})$
such that the $\gamma_{4}>0$ in Lemma \ref{lem:square-dist-dec}
can be adjusted if necessary so that 
\begin{align}
 & \begin{array}{c}
\tilde{F}_{S}^{\bar{k}+1}(\mathbf{z}^{\bar{k}+1},\mathbf{s}^{\bar{k}+1})\leq F_{S}^{\bar{k}}(\mathbf{z}^{\bar{k}+0.5},\mathbf{s}^{\bar{k}+0.5})\leq\tilde{F}_{S}^{\bar{k}}(\mathbf{z}^{\bar{k}},\mathbf{s}^{\bar{k}})-\frac{\gamma_{4}}{2}\|\mathbf{x}^{\bar{k}}-\mathbf{x}^{\bar{k}+0.5}\|^{2},\end{array}\label{eq:dec-fn-val}\\
\text{ and } & \begin{array}{c}
d(\mathbf{x}^{\bar{k}},H_{\beta^{*}})\leq\frac{s_{\min}+s_{\max}}{2s_{\min}}\|\mathbf{x}^{\bar{k}}-\mathbf{x}^{\bar{k}+0.5}\|.\end{array}\label{eq:distance-bdd}
\end{align}
 We split into two cases 

\textbf{Case 2a: $\beta^{*}=\{i,(i,j)\}$}

We can take $\mathbf{x}^{\bar{k}+0.5}$ to be $\mathbf{x}^{\bar{k}+1}$.
Then \eqref{eq:dec-fn-val} follows from Lemma \ref{lem:square-dist-dec}.
Since $s_{(i,j)}^{\bar{k}+1}=0$, by \eqref{eq:convention}, we have
$x_{i}^{\bar{k}+1}=x_{(i,j)}^{\bar{k}+1}$, so $\mathbf{x}^{\bar{k}+1}\in H_{\beta^{*}}$.
This means that \eqref{eq:distance-bdd} holds even if $\frac{s_{\min}+s_{\max}}{2s_{\min}}$
were replaced by the constant $1$. 

\textbf{Case 2b: $\beta^{*}=\{i,j\}$}

Define $\mathbf{x}^{\bar{k}+0.5}$ and $\mathbf{s}^{\bar{k}+0.5}$
by 
\begin{equation}
x_{\alpha'}^{\bar{k}+0.5}=\begin{cases}
\frac{s_{i}^{\bar{k}+1}x_{i}^{\bar{k}}+s_{j}^{\bar{k}}x_{j}^{\bar{k}}}{s_{i}^{\bar{k}+1}+s_{j}^{\bar{k}}} & \text{ if }\alpha'=j\\
x_{\alpha'}^{\bar{k}} & \text{ otherwise,}
\end{cases}\text{ and }s_{\alpha'}^{\bar{k}+0.5}=\begin{cases}
s_{i}^{\bar{k}}-s_{i}^{\bar{k}+1} & \text{ if }\alpha'=i\\
s_{j}^{\bar{k}}+s_{i}^{\bar{k}+1} & \text{ if }\alpha'=j\\
s_{\alpha'}^{\bar{k}} & \text{\,otherwise. }
\end{cases}\label{eq:k-and-half-step}
\end{equation}
We first show \eqref{eq:dec-fn-val}. From how $\mathbf{z}^{\bar{k}+0.5}$
and $\mathbf{x}^{\bar{k}+0.5}$ are defined, we have
\begin{eqnarray}
 &  & \begin{array}{c}
\tilde{F}_{S}^{\bar{k}}(\mathbf{z}^{\bar{k}},\mathbf{s}^{\bar{k}})-\tilde{F}_{S}^{\bar{k}}(\mathbf{z}^{\bar{k}+0.5},\mathbf{s}^{\bar{k}+0.5})\end{array}\label{eq:decrease-for-k-bar-half}\\
 & \overset{\eqref{eq:F-S-tilde}}{=} & \begin{array}{c}
\frac{s_{i}^{\bar{k}+1}}{2}\|x_{i}^{\bar{k}}\|^{2}+\frac{s_{j}^{\bar{k}}}{2}\|x_{j}^{\bar{k}}\|^{2}-\frac{s_{i}^{\bar{k}+1}+s_{j}^{\bar{k}}}{2}\|\frac{s_{i}^{\bar{k}+1}x_{i}^{\bar{k}}+s_{j}^{\bar{k}}x_{j}^{\bar{k}}}{s_{i}^{\bar{k}+1}+s_{j}^{\bar{k}}}\|^{2}\end{array}\nonumber \\
 & \overset{\eqref{eq:diff-identity}}{=} & \begin{array}{c}
\frac{s_{i}^{\bar{k}+1}s_{j}^{\bar{k}}}{2(s_{i}^{\bar{k}+1}+s_{j}^{\bar{k}})}\|x_{i}^{\bar{k}}-x_{j}^{\bar{k}}\|^{2}\overset{\eqref{eq:k-and-half-step}}{=}\frac{s_{j}^{\bar{k}}(s_{i}^{\bar{k}+1}+s_{j}^{\bar{k}})}{2s_{i}^{\bar{k}+1}}\|x_{j}^{\bar{k}+0.5}-x_{j}^{\bar{k}}\|^{2}\end{array}\nonumber \\
 & \overset{\scriptsize{\text{Assu \ref{assu:lin-conv-assu}(2)}}}{\geq} & \begin{array}{c}
\frac{s_{\min}}{2}\|x_{j}^{\bar{k}+0.5}-x_{j}^{\bar{k}}\|^{2}\overset{\eqref{eq:k-and-half-step}}{=}\frac{s_{\min}}{2}\|\mathbf{x}^{\bar{k}+0.5}-\mathbf{x}^{\bar{k}}\|^{2}.\end{array}\nonumber 
\end{eqnarray}
We now show the first inequality in \eqref{eq:dec-fn-val}. Recall
\eqref{eq:y-equals-s-x-formula}, and let $d_{i}=\outdeg(i)+1$. The
formulas in \eqref{eq:k-and-half-step} can be equivalently written
as 
\begin{equation}
y_{\alpha'}^{\bar{k}+0.5}=\begin{cases}
\left(1-\frac{1}{d_{i}}\right)y_{i}^{\bar{k}} & \text{ if }\alpha'=i\\
y_{j}^{\bar{k}}+\frac{1}{d_{i}}y_{i}^{\bar{k}} & \text{ if }\alpha'=j\\
x_{\alpha'}^{\bar{k}} & \text{ otherwise,}
\end{cases}\text{ and }s_{\alpha'}^{\bar{k}+0.5}=\begin{cases}
\left(1-\frac{1}{d_{i}}\right)s_{i}^{\bar{k}} & \text{ if }\alpha'=i\\
s_{j}^{\bar{k}}+\frac{1}{d_{i}}s_{i}^{\bar{k}} & \text{ if }\alpha'=j\\
s_{\alpha'}^{\bar{k}} & \text{\,otherwise. }
\end{cases}\label{eq:k-and-half-step-y}
\end{equation}
One way to interpret \eqref{eq:k-and-half-step-y} is that the change
from $(\mathbf{z}^{\bar{k}},\mathbf{x}^{\bar{k}},\mathbf{s}^{\bar{k}})$
to $(\mathbf{z}^{\bar{k}+0.5},\mathbf{x}^{\bar{k}+0.5},\mathbf{s}^{\bar{k}+0.5})$
is a transfer of mass from $i$ to $j$. One can see that the change
from $(\mathbf{z}^{\bar{k}+0.5},\mathbf{x}^{\bar{k}+0.5},\mathbf{s}^{\bar{k}+0.5})$
to $(\mathbf{z}^{\bar{k}+1},\mathbf{x}^{\bar{k}+1},\mathbf{s}^{\bar{k}+1})$
involves a transfer of mass from $(i,j)$ to $j$, from $i$ to $(i,j')$
for all other $j'\in\Nout(i)\backslash\{j\}$, as well as possibly
from $(i,j')$ to $j'$ for $j'\in\Nout(i)\backslash\{j\}$ if the
corresponding Operation $B$ were carried out. As we saw in the derivation
in \eqref{eq:decrease-for-k-bar-half}, each transfer of mass reduces
the dual objective value, which will give the first inequality in
\eqref{eq:dec-fn-val}. 

To see \eqref{eq:distance-bdd}, note that 
\begin{align*}
 & \begin{array}{c}
d(\mathbf{x}^{\bar{k}},H_{\beta^{*}})^{2}\overset{\eqref{eq:set-H-formula}}{=}\|x_{i}^{\bar{k}}-\frac{1}{2}(x_{i}^{\bar{k}}+x_{j}^{\bar{k}})\|^{2}+\|x_{j}^{\bar{k}}-\frac{1}{2}(x_{i}^{\bar{k}}+x_{j}^{\bar{k}})\|^{2}=\frac{1}{2}\|x_{i}^{\bar{k}}-x_{j}^{\bar{k}}\|^{2}\end{array}\\
\overset{\eqref{eq:k-and-half-step}}{=} & \begin{array}{c}
\frac{s_{i}^{\bar{k}+1}+s_{j}^{\bar{k}}}{2s_{i}^{\bar{k}+1}}\|x_{j}^{\bar{k}}-x_{j}^{\bar{k}+0.5}\|^{2}\overset{\scriptsize{\text{Assu \ref{assu:lin-conv-assu}(2)}}}{\leq}\frac{s_{\min}+s_{\max}}{2s_{\min}}\|x_{j}^{\bar{k}}-x_{j}^{\bar{k}+0.5}\|^{2}\end{array}\\
\overset{\eqref{eq:k-and-half-step}}{=} & \begin{array}{c}
\frac{s_{\min}+s_{\max}}{2s_{\min}}\|\mathbf{x}^{\bar{k}}-\mathbf{x}^{\bar{k}+0.5}\|^{2}.\end{array}
\end{align*}
This ends the claim. $\hfill\triangle$

Now, 
\begin{eqnarray}
 &  & \begin{array}{c}
\tilde{F}_{S}^{K}(\mathbf{z}^{K},\mathbf{s}^{K})\leq\tilde{F}_{S}^{\bar{k}}(\mathbf{z}^{\bar{k}+0.5},\mathbf{s}^{\bar{k}+0.5})\end{array}\label{eq:final-1}\\
 & \overset{\scriptsize{\mbox{Lem \ref{lem:square-dist-dec},\eqref{eq:dec-fn-val}}}}{\leq} & \begin{array}{c}
\tilde{F}_{S}^{0}(\mathbf{z}^{0},\mathbf{s}^{0})-\frac{\gamma_{4}}{2}\Big(\|\mathbf{x}^{\bar{k}}-\mathbf{x}^{\bar{k}+0.5}\|^{2}+\underset{k'=0}{\overset{\bar{k}-1}{\sum}}\|\mathbf{x}^{k'}-\mathbf{x}^{k'+1}\|^{2}\Big).\end{array}\nonumber 
\end{eqnarray}
Let $\tilde{x}^{\bar{k}+0.5}\in H_{\beta^{*}}$ be such that $\|\mathbf{x}^{\bar{k}}-\tilde{\mathbf{x}}^{\bar{k}+0.5}\|=d(\mathbf{x}^{\bar{k}},H_{\beta^{*}})$.
Then
\begin{eqnarray}
 &  & \begin{array}{c}
\frac{\gamma_{4}}{2}\Big(\|\mathbf{x}^{\bar{k}}-\mathbf{x}^{\bar{k}+0.5}\|^{2}+\underset{k'=0}{\overset{\bar{k}-1}{\sum}}\|\mathbf{x}^{k'}-\mathbf{x}^{k'+1}\|^{2}\Big)\end{array}\label{eq:final-2}\\
 & \overset{\eqref{eq:distance-bdd}}{\geq} & \begin{array}{c}
\frac{\gamma_{4}s_{\min}}{(s_{\min}+s_{\max})}\Big(\|\mathbf{x}^{\bar{k}}-\tilde{\mathbf{x}}^{\bar{k}+0.5}\|^{2}+\underset{k'=0}{\overset{\bar{k}-1}{\sum}}\|\mathbf{x}^{k'}-\mathbf{x}^{k'+1}\|^{2}\Big)\end{array}\nonumber \\
 & \geq & \begin{array}{c}
\frac{\gamma_{4}s_{\min}}{(\bar{k}+1)(s_{\min}+s_{\max})}\Big\|(\mathbf{x}^{\bar{k}}-\tilde{\mathbf{x}}^{\bar{k}+0.5})+\underset{k'=0}{\overset{\bar{k}-1}{\sum}}(\mathbf{x}^{k'}-\mathbf{x}^{k'+1})\Big\|^{2}\end{array}\nonumber \\
 & \geq & \begin{array}{c}
\frac{\gamma_{4}s_{\min}}{K(s_{\min}+s_{\max})}\|\mathbf{x}^{0}-\tilde{\mathbf{x}}^{\bar{k}+0.5}\|^{2}\overset{\tilde{\mathbf{x}}^{\bar{k}+0.5}\in H_{\beta^{*}}}{\geq}\frac{\gamma_{4}s_{\min}}{K(s_{\min}+s_{\max})}d(\mathbf{x}^{0},H_{\beta^{*}})^{2}\end{array}\nonumber \\
 & \overset{\eqref{eq:lin-reg-arg}}{\geq} & \begin{array}{c}
\frac{\gamma_{4}s_{\min}^{2}(1-\gamma_{3})}{2K(s_{\min}+s_{\max})s_{\max}M(1-\gamma_{2})\kappa^{2}}[\tilde{F}_{S}^{0}(\mathbf{z}^{0},\mathbf{s}^{0})-F_{S}^{*}].\end{array}\nonumber 
\end{eqnarray}
Combining \eqref{eq:final-1} and \eqref{eq:final-2} gives 
\begin{equation}
\begin{array}{c}
\tilde{F}_{S}^{K}(\mathbf{z}^{K},\mathbf{s}^{K})-F_{S}^{*}\overset{\eqref{eq:final-1},\eqref{eq:final-2}}{\leq}\Big(1-\frac{\gamma_{4}s_{\min}^{2}(1-\gamma_{3})}{2K(s_{\min}+s_{\max})s_{\max}M(1-\gamma_{2})\kappa^{2}}\Big)[\tilde{F}_{S}^{0}(\mathbf{z}^{0},\mathbf{s}^{0})-F_{S}^{*}].\end{array}\label{eq:final-concl-2}
\end{equation}
Combining \eqref{eq:fin-concl-1} for case 1 and \eqref{eq:final-concl-2}
gives the conclusion needed.  
\end{proof}

\section{Numerical experiments}

We conduct some simple experiments by looking at the case where $m=6$
and the graph has 6 nodes and contains two cycles, $1\to2\to3\to5\to1$
and $2\to4\to6\to2$. Let $\mathbf{e}$ be \texttt{ones(m,1)}. First,
we find $\{v_{i}\}_{i\in V}$ and $\bar{x}$ such that $\sum_{i\in V}v_{i}+|V|(\mathbf{e}-\bar{x})=0$.
We then find closed convex functions $f_{i}(\cdot)$ such that $v_{i}\in\partial f_{i}(\mathbf{e})$.
It is clear from the KKT conditions that $\mathbf{e}$ is the primal
optimum solution to \eqref{eq:dist_opt_pblm} if $\bar{x}_{i}=\bar{x}$
for all $i\in V$. 

We define $f_{i}(\cdot)$ as functions of the following type:
\begin{itemize}
\item [(F-S)]$f_{i}(x):=\frac{1}{2}x^{T}A_{i}x+b_{i}^{T}x+c_{i}$, where
$A_{i}$ is of the form $vv^{T}+rI$, where $v$ is generated by \texttt{rand(m,1)},
$r$ is generated by \texttt{rand(1)}. $b_{i}$ is chosen to be such
that $v_{i}=\nabla f(\mathbf{e})$, and $c_{i}=0$. 
\end{itemize}
The first and last formulas of \eqref{eq:big-formula} indicate how
fast the primal iterates $\{x_{\alpha}\}_{\alpha\in V\cup E}$ are
converging to the optimal solution $x^{*}$, and we call these values
the ``duality gap'' and the ``norms squared $s$-sum'' Figure
\ref{fig:fig} shows a plot of the results obtained by a random experiment
where we perform 1000 iterations of the smooth case. We conduct two
different experiments: one for when all functions are treated to be
in $V_{1}$ (called the prox case) and one when all functions are
treated to be in $V_{2}$ (called the subdifferentiable case). We
observe linear convergence for both cases. 

\begin{figure}[!h]
\includegraphics[scale=0.3]{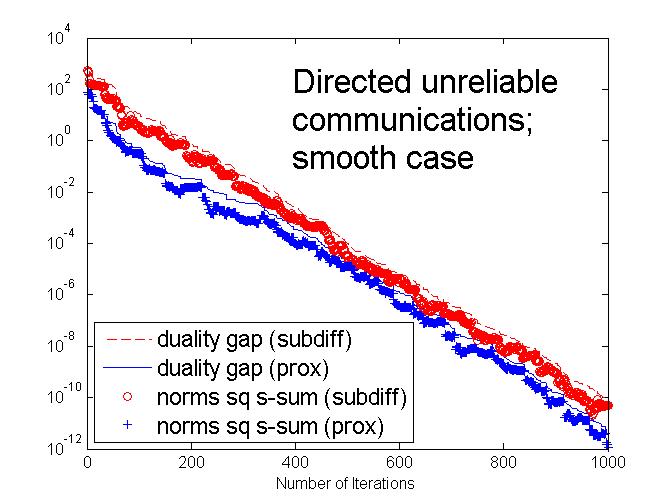}

\caption{\label{fig:fig}Plots of the formulas in \eqref{eq:big-formula}}

\end{figure}
\bibliographystyle{amsalpha}
\bibliography{../refs}

\end{document}